\author{Michael Albert\affiliationmark{1}
  \and V\'{i}t Jel\'{i}nek\affiliationmark{2}
  \and Michal Opler\affiliationmark{2}}
\title[Two examples of Wilf-collapse]{Two examples of Wilf-collapse}
\affiliation{
  % one line per affiliation, no postal codes, grant numbers or similar
  University of Otago, Dunedin, New Zealand\\
  Charles University, Prague, Czechia}
\keywords{permutations, Wilf-equivalence, generating functions, growth rates}
\newcommand\absdot[2]{
	% Make a dot of fixed absolute size.
	\node at #1 {\normalsize $\bullet$};
	\node at #1 [below] {$#2$};
}
\newcommand{\plotperm}[1]{
	\foreach \j [count=\i] in {#1} {
		\absdot{(\i,\j)}{};
	};
}
\newcommand{\plotpermgraph}[1]{
	\foreach \j [count=\i] in {#1} {
		\foreach \b [count=\a] in {#1} {
			% Draw edge from (a,b) to (i,j) if they form an inversion.
			\ifthenelse{\a<\i \AND \b>\j}{\draw (\a,\b)--(\i,\j);}{}
		};
	};
	\plotperm{#1};
}
\newcommand{\A}{{\mathcal A}}
\newcommand{\C}{{\mathcal C}}
\newcommand{\I}{{\mathcal I}}
\renewcommand{\L}{{\mathcal L}}
\newcommand{\M}{{\mathcal M}}
\renewcommand{\S}{{\mathcal S}}
\newcommand{\U}{{\mathcal U}}
\newcommand{\X}{{\mathcal X}}
\newcommand{\SIO}{{\mathcal{SIO}}} % Sub inc-osc
\renewcommand{\a}{\mathsf{a}} % Letter of inc-osc alphabet
\renewcommand{\b}{\mathsf{b}} % Letter of inc-osc alphabet
\newcommand{\w}{\mathsf{w}} % Letter of inc-osc alphabet
\newcommand{\m}{\mathsf{m}} % Letter of inc-osc alphabet
\newtheorem*{theorem*}{Theorem}
\newtheorem{theorem}{Theorem}[section]
\newtheorem{corollary}[theorem]{Corollary}
\newtheorem{lemma}[theorem]{Lemma}
\newtheorem{proposition}[theorem]{Proposition}
\theoremstyle{definition}
\newtheorem{observation}[theorem]{Observation}
\newcommand{\Av}{\operatorname{Av}}
\newcommand{\Inv}{\operatorname{Inv}}
\newcommand{\we}{\equiv}
\renewcommand{\leq}{\leqslant}
\newcommand{\bottom}{\bot} % For "undefined"
\begin{document}
\maketitle

\begin{abstract}
Two permutation classes, the X-class and subpermutations of the increasing oscillation are shown to exhibit an exponential Wilf-collapse. This means that the number of distinct enumerations of principal subclasses of each of these classes grows much more slowly than the class itself whereas \emph{a priori}, based only on symmetries of the class, there is no reason to expect this. The underlying cause of the collapse in both cases is the ability to apply some form of local symmetry which, combined with a greedy algorithm for detecting patterns in these classes, yields a Wilf-collapse.
\end{abstract}

\section{Introduction}

The coincidences that arise when two unrelated, or peripherally related collections of combinatorial structures have the same generating function have always attracted interest. In such cases one frequently seeks an explanation of this coincidence which might be an explicit size-preserving bijection between the classes, or parallel structural descriptions of the classes which frequently amount to an implicit description of such a bijection. In this paper we are concerned with certain families of such coincidences which are called \emph{Wilf-equivalence}. These arise when the two classes in question both occur as subsets of some universal class carrying a partial order (always interpreted as containment of structures, and presumed to satisfy the obvious conditions that such an order should have). Specifically if the universe is $\U$ and it carries a partial order $\leq$, then, to each $\alpha \in \U$ we associate the subclass of structures \emph{avoiding} $\alpha$, i.e.,
\[
\Av(\alpha) = \{ \tau \in \U \, : \, \alpha \not \leq \tau \}.
\]
We say that $\alpha$ and $\beta$ are \emph{Wilf-equivalent} (in $\U$) if $\Av(\alpha)$ and $\Av(\beta)$ have the same generating function.

More particularly, we are interested in situations where Wilf-equivalence is the rule, rather than the exception. For two structures to be Wilf-equivalent they must have the same size (since, for any $\alpha$ the lowest order term of the generating function for $\Av(\alpha)$ that differs from that of $\U$ is the term of degree $|\alpha|$ (where $|\alpha|$ denotes the size of $\alpha$). If, as $n \to \infty$ the number, $w_n$, the number of equivalence classes of the Wilf-equivalence relation on structures of size $n$ in $\U$ is small relative to the total number, $u_n$ of structures of size $n$ in $\U$ then we say that $\U$ exhibits a \emph{Wilf-collapse}. More specifically, a Wilf-collapse occurs if $w_n = o(u_n)$. If, for some constant $c < 1$, $w_n = o(c^n u_n)$ then we say that $\U$ exhibits an \emph{exponential Wilf-collapse}.

At the risk of some confusion we will refer to the equivalence classes of the Wilf-equivalence relation in a universe $\U$ as \emph{Wilf-equivalence classes}.

Here we will demonstrate an exponential Wilf-collapse in two permutation classes, $\X$ and $\SIO$ (defined below).  Our results complement those of another paper by the same authors, \cite{AJO:collapse-in-sum}, which includes some sufficient conditions to establish exponential Wilf-collapse. The independent interest of these two particular examples is that neither $\X$ nor $\SIO$ satisfies those conditions. However, both admit a natural representation of structures as words over an infinite alphabet which combines with a greedy algorithm to decide the containment relation to allow ``local'' applications of symmetries in creating Wilf-equivalent structures. It is in exploring this central thesis with new examples that we believe the value of this paper lies.

\subsection{Permutation classes}

In this subsection we give the briefest possible introduction to the basics of permutation classes as required for the remainder of the paper. For a much more comprehensive introduction we recommend \cite{Vatter-survey}.

Permutations of size $n$ are bijections from $[n] = \{1,2,\dots,n\}$ to itself, usually written in one line notation as $\pi = \pi_1 \pi_2 \dots \pi_n$. A permutation $\pi$ of size $k$ is \emph{contained in} another permutation $\tau$ if there is a subsequence of $k$ elements in $\tau$ whose relative ordering is the same as that of $\pi$. For instance 321 is contained in 3714526 because the subsequence 742 is in decreasing order. In this situation we write $\pi \leq \tau$ and also say that $\pi$ is \emph{involved in} $\tau$. If $\pi \not \leq \tau$ then we say that $\tau$ \emph{avoids} $\pi$.

The \emph{symmetries} of the universe $\S$ of all permutations are the order-automorphisms of $\S$. Concretely, there are eight of them generated by:
\begin{description}
  \item[reverse] which simply reverses the one-line representation of each permutation,
  \item[complement] which, in a permutation of size $n$ replaces each value $a$ in its one line notation by $n+1-a$, and
  \item[inverse] the normal inverse operation on bijections.
\end{description}

These are most naturally understood as operating on the set of points $(i, \pi_i)$ representing a permutation by reflection through a vertical, horizontal, or diagonal axis (and compositions of such reflections of course).

The \emph{sum}, $\sigma = \alpha \oplus \beta$, of two permutations $\alpha$ and $\beta$ is the permutation of size $|\alpha| + |\beta|$ where:
\[
\sigma(i) =
\left\{
\begin{array}{ll}
\alpha(i) & \text{for } 1 \leq i \leq |\alpha| \\
|\alpha| + \beta(i - |\alpha|)	& \text{for } |\alpha| < i  \leq |\alpha| + |\beta|
\end{array}
\right.
\]
That is, ``place $\alpha$ to the left of and below $\beta$''. If we change ``below'' to ``above'', then we will have defined the \emph{skew-sum}, $\alpha \ominus \beta$. A permutation is sum- (resp.~skew-) indecomposable if it cannot be written as a sum (skew-sum) of two permutations both of smaller size.

A \emph{permutation class} (or just \emph{class}), $\C$ is a collection of permutations downward-closed under containment, i.e., if $\tau \in \C$ and $\pi \leq \tau$, then $\pi \in \C$.  When we speak about Wilf-collapse it will always be within a universe that is some permutation class (specifically, the classes $\X$ and $\SIO$ introduced below). 

A class is \emph{sum-closed} if $\alpha, \beta \in \C$ implies that $\alpha \oplus \beta \in \C$. As above we use $\Av(\pi)$ to denote the set of permutations avoiding $\pi$ (possibly in a given class when the context is clear), and also $\Inv(\pi)$ to denote the set of permutations involving $\pi$.

We repeat now the definitions of Wilf-equivalence and Wilf-collapse in the context of permutation classes in particular. Throughout, we consider a particular class $\C$ to be given as our ``universe'' and all permutations mentioned are assumed to belong to $\C$ and all concepts are relative to $\C$. For instance, $\Av(\alpha) = \C \cap \{ \beta \, : \, \alpha \not \leq \beta \}$.

Two permutations $\alpha, \beta \in \C$ are \emph{Wilf-equivalent} if the generating functions of $\Av(\alpha)$ and $\Av(\beta)$ are the same. Equivalent conditions include: \begin{itemize}
 \item
 the generating functions of $\Inv(\alpha)$ and $\Inv(\beta)$ are the same,
 \item	
 there is a size preserving bijection between $\Av(\alpha)$ and $\Av(\beta)$,
 \item
 there is a size preserving bijection between $\Inv(\alpha)$ and $\Inv(\beta)$, and 
 \item
 there is a size preserving bijection from $\C$ to itself that sends $\Av(\alpha)$ to $\Av(\beta)$.
 \end{itemize}
In different contexts we have observed any one of these equivalent conditions to be the most natural one to prove. In $\X$ it will turn out to be easiest to compute (and compare) generating functions for $\Inv(\alpha)$ and $\Inv(\beta)$, while in $\SIO$ we demonstrate the existence of implicitly defined bijections.

Let $c_n$ denote the number of permutations in $\C$ of size $n$ and $w_n$ the number of Wilf-equivalence classes among those. Then we say that $\C$ exhibits a \emph{Wilf-collapse} if $w_n = o(c_n)$, and an \emph{exponential Wilf-collapse} if $w_n = o(r^n c_n)$ for some $0 < r < 1$.

In \cite{AJO:collapse-in-sum} we demonstrated some sufficient conditions for Wilf-collapse in sum-closed classes. One of these, Theorem 4.1, was:
\begin{theorem*}
\label{thm-sumclosed}
Any supercritical sum-closed class, $\C$,  that contains an incompatible pair has an exponential Wilf collapse, unless $\C$ is the class of increasing permutations.
%If $\C$ is a supercritical sum-closed class, other than the class of increasing permutations, and there exists an incompatible pair $(a,b)$, then $\C$ 
%has an exponential Wilf collapse, unless $\C$ is the class of increasing permutations.
\end{theorem*}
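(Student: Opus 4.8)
The plan is to exploit the unique decomposition of each permutation in a sum-closed class into sum-indecomposable components. Writing $I(x)$ for the generating function of the sum-indecomposable members of $\C$, sum-closure gives the sequence schema $C(x) = 1/\bigl(1 - I(x)\bigr)$, so every $\tau \in \C$ is a unique sum $\tau = \delta_1 \oplus \cdots \oplus \delta_m$ of indecomposables. The first ingredient I would set up is a structural description of containment: since a sum-indecomposable pattern component cannot straddle the boundary between two components of $\tau$, each indecomposable component of a pattern $\pi = p_1 \oplus \cdots \oplus p_k$ must embed inside a single component of $\tau$, and the components of $\pi$ are distributed in order among those of $\tau$, with each $\delta_i$ receiving a consecutive (possibly empty) block $p_a \oplus \cdots \oplus p_{a'}$ that it must contain. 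A greedy left-to-right assignment --- give each $\delta_i$ the longest feasible prefix of the unmatched suffix of $\pi$ --- is optimal by a standard exchange argument, so containment is decided greedily.

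Here $(\alpha,\beta)$ is the incompatible pair, which I take to mean two sum-indecomposable permutations of a common size $s$ such that no sum-indecomposable member of $\C$ contains both. The key consequence is a mutual exclusivity: if a single component $\delta_i$ hosted a block of pattern components containing both an $\alpha$ and a $\beta$, then $\delta_i$ would contain $\alpha \oplus \beta$, hence both $\alpha$ and $\beta$, contradicting incompatibility; thus every component of $\tau$ can absorb copies of at most one of $\alpha,\beta$. On this basis I would establish the central \emph{local exchange lemma}: replacing a single component equal to $\alpha$ in a pattern $\pi$ by $\beta$ (or vice versa) produces a Wilf-equivalent pattern $\pi'$. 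The natural route is to build a size-preserving bijection between $\Inv(\pi)$ and $\Inv(\pi')$ componentwise, using the greedy matching together with the mutual exclusivity to reroute exactly the $\alpha$-hosting component into a $\beta$-hosting one while leaving the rest of $\tau$ untouched. Verifying that this map is well defined and bijective --- independently of which of the possibly many greedy matchings is used, and regardless of whether the swapped component is adjacent to equal components --- is the heart of the argument and the step I expect to be the main obstacle.

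The second ingredient is supercriticality, which supplies abundantly many places to apply the exchange. Because $\C$ is supercritical, the dominant singularity $\rho_C = 1/\gr(\C)$ of $C$ satisfies $I(\rho_C) = 1$ with $\rho_C$ strictly smaller than the radius of convergence of $I$; the supercritical sequence schema then guarantees that all but an exponentially small fraction of the permutations of size $n$ in $\C$ have $\Theta(n)$ indecomposable components, concentrated about the mean. Since $\alpha$ and $\beta$ are fixed indecomposables, each occurs as a component with positive asymptotic density, so for a suitable constant $c>0$ all but an exponentially small fraction of size-$n$ permutations have at least $cn$ components equal to $\alpha$ or to $\beta$.

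Finally I would combine these. Regarding each such permutation as a \emph{pattern}, the exchange lemma applies to every one of its $\alpha$- and $\beta$-components, and since Wilf-equivalence is transitive, flipping an arbitrary subset of these components one at a time shows that the pattern is Wilf-equivalent to each of the $2^{t}$ distinct patterns obtained, where $t \geq cn$ is its number of swappable components. Hence every Wilf-equivalence class meeting this typical set has size at least $2^{cn}$, giving
\[
w_n \;\leq\; \frac{c_n}{2^{cn}} + (\text{number of atypical patterns}),
\]
where the second term is $o(\gamma^{n} c_n)$ for some $\gamma<1$ by the large-deviation bound above. Choosing any $r<1$ above $\max(2^{-c},\gamma)$ yields $w_n = o(r^{n} c_n)$, the claimed exponential Wilf-collapse. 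The sole excluded case, $\C$ equal to the class of increasing permutations, is genuinely degenerate: its only sum-indecomposable is the single point, so no incompatible pair exists and $\gr(\C)=1$; every other supercritical sum-closed class containing an incompatible pair is covered.
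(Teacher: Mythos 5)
There is a genuine gap, and it sits exactly where you predicted the main obstacle would be: your central ``local exchange lemma'' is false, and the failure traces back to a misreading of the hypothesis. The paper defines an incompatible pair as sum-indecomposables $a, b \in \C$ such that neither $a \oplus b$ nor $b \oplus a$ is contained in any sum-indecomposable of $\C$; it does not require $|a| = |b|$, and it does not say that no single indecomposable contains both $a$ and $b$. Your reading is a strictly stronger hypothesis (so at best you would prove a weaker theorem), and even so it does not support your lemma. Concretely, let $\C$ be the sum closure of the downward closure of $\{321, 3142\}$; its sum-indecomposables are $1$, $21$, $231$, $312$, $321$, $3142$. The pair $(321, 312)$ has equal sizes, satisfies your mutual-exclusivity condition, and is incompatible in the paper's sense, yet $321$ and $312$ are not Wilf-equivalent in $\C$: the size-$4$ members of $\C$ containing $321$ are $3214$ and $1432$ (two of them), while those containing $312$ are $3124$, $1423$ and $3142$ (three of them). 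Swapping a single component $\alpha \leftrightarrow \beta$ of a pattern cannot preserve the Wilf class, because the two letters may sit inside larger indecomposables of $\C$ in completely different ways, and mutual exclusivity says nothing about that. A second symptom of the definitional slip is your closing remark that the class of increasing permutations contains no incompatible pair: under the paper's definition it does, namely $(1,1)$, which is precisely why the theorem needs its explicit exclusion clause rather than having that case ruled out by the hypotheses.

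The exchange that incompatibility is actually designed to support is a swap of \emph{adjacent factors}: $P \oplus a \oplus b \oplus S$ versus $P \oplus b \oplus a \oplus S$. This is size-preserving with no assumption on $|a|, |b|$, and incompatibility is exactly the condition forcing, in any embedding into a host $\delta_1 \oplus \cdots \oplus \delta_m$, the images of the designated $a$ and $b$ into distinct components $\delta_i$ (otherwise that component would contain $a \oplus b$); this separation is what allows a greedy, componentwise bijection to be rerouted. Note that the present paper never proves the quoted statement --- it is Theorem 4.1 of \cite{AJO:collapse-in-sum} --- but its Section 3 displays the intended template for $\SIO$: establish the factor swap $\w_3 \m_4 \sim \w_4 \m_3$ via a substitution principle, then invoke Proposition 3.10 of \cite{AJO:collapse-in-sum} to find $cn$ disjoint occurrences of that factor in almost all words, yielding $2^{cn}$ permutations per equivalence class. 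Your outer argument (greedy recognition of containment, supercritical concentration, $2^{cn}$ independent flips, and the final estimate on $w_n$) correctly mirrors that template; what must be replaced is the core exchange step, and proving the factor-swap equivalence is the genuinely nontrivial bijective work that your proposal leaves unsupplied.
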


A few words in the statement of that theorem require further explanation: $\C$ is supercritical if the radius of convergence of the generating function of its sum-indecomposables is strictly greater than the radius of convergence of its generating function, and an incompatible pair is a pair of sum-indecomposable permutations $a$ and $b$ of $\C$ such that neither $a \oplus b$ nor $b \oplus a$ is a subpermutation of any sum-indecomposable element of $\C$.

The examples we consider in this paper have an exponential Wilf collapse. However,  the first one, $\X$, is not sum-closed and the second, $\SIO$ is sum-closed and super-critical but contains no incompatible pair. 

\section{The X-class}

The class $\X = \Av(2143, 2413, 3142, 3412)$ can be defined recursively as 1, together with all those permutations that start or finish with their maximum or minimum and that, when such an element is deleted, belong to $\X$. Alternatively, we can view it as the closure of $\{1\}$ under the four operations:
\[
\pi \mapsto 1 \oplus \pi, \: 
\pi \mapsto 1 \ominus \pi, \: 
\pi \mapsto \pi \oplus 1, \: 
\pi \mapsto \pi \ominus 1,
\]
or as the set of permutations that can be drawn on the diagonals of an axis-aligned square. The class $\X$ is closed under all the symmetries of $\S$. We will not explicitly state symmetrical variations of results below, but will feel free to use them as required.

Plainly, $\X$ is not sum- or skew-sum-closed and so the results of \cite{AJO:collapse-in-sum} cannot provide a Wilf-collapse. But, as we shall see there is a natural way of representing the elements of $\X$ as words over an (infinite) alphabet $\A$ and also a greedy method for determining when $\pi \leq \tau$ for $\pi, \tau \in \X$. Together, these provide a factorisation theorem for the generating functions of $\X \cap \Inv(\pi)$ in terms of the letters that occur in the word representing $\pi$. This is more than sufficient for an exponential Wilf-collapse.

The type of $\pi \in \X$ is either \emph{monotone} (if it is monotone increasing or decreasing) or \emph{crossed} (otherwise). If $\pi$ has size at least two and begins with its minimum or ends with its maximum we say it is \emph{positive} (or \emph{has positive sign}) while if it begins with its maximum or ends with its minimum it is \emph{negative}. Note that, according to these definitions, the permutation $1$ has no sign but every other permutation in $\X$ has a uniquely defined sign. For a non-negative integer $a$ we also use $a$ to denote the monotone increasing permutation of size $a$, and $-a$ to denote the monotone decreasing permutation of size $a$.

If $\pi \in \X$ of size $n$ is crossed and positive then, written in one-line notation, $\pi$ either begins with 1 or ends with $n$. In fact, for some $a, b \geq 0$ with $0 < a + b < n-1$, $\pi$ begins with $1 2 \dots a$ but not $1 2 \dots (a+1)$ and ends with $(n-b+1) (n-b+2) \dots n$ but not $(n-b) (n-b+1) \dots n$. That is, for this unique pair $(a,b)$ and some negative $\theta \in \X$, $\pi = a \oplus \theta \oplus b$. The conclusion of this discussion is:

\begin{observation}
\label{obs:shell}
If $\pi \in \X$ is crossed and positive then there exists a unique negative $\theta \in \X$ and a pair of non-negative integers $a, b$ with $a + b > 0$ such that $\pi = a \oplus \theta \oplus b$.
\end{observation}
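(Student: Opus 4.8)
The plan is to leverage the structural description of $\X$ already assembled in the paragraph preceding the observation, and to argue that the displayed decomposition $\pi = a \oplus \theta \oplus b$ exists and is unique. First I would fix a crossed, positive $\pi \in \X$ of size $n$. Since $\pi$ is positive it either begins with its minimum $1$ or ends with its maximum $n$; since it is crossed (not monotone) it cannot be the increasing permutation, so I can extract maximal monotone increasing runs from both ends. Concretely, I would let $a \geq 0$ be the largest integer such that $\pi$ begins with the prefix $1\,2\cdots a$, and let $b \geq 0$ be the largest integer such that $\pi$ ends with the suffix $(n-b+1)\cdots n$. These are well defined, and maximality makes the pair $(a,b)$ unique.

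Next I would show $a + b > 0$ and that deleting these initial and final increasing blocks leaves a genuine negative permutation $\theta \in \X$. Because $\pi$ is positive, at least one of the two ``monotone ends'' is present, giving $a \geq 1$ or $b \geq 1$, hence $a+b>0$. The bound $0 < a+b < n-1$ from the preceding discussion guarantees that the middle block is nonempty and has size at least two, so it has a well-defined sign. Writing $\pi = a \oplus \theta \oplus b$ expresses the point set of $\pi$ as the increasing block of size $a$ placed below-left, the block $\theta$ in the middle, and the increasing block of size $b$ placed above-right; since $\X$ is closed under taking subpermutations (it is a class), $\theta \in \X$. I would then verify that $\theta$ is negative: by the maximality of $a$ and $b$, the first element of $\theta$ is not its minimum and the last element of $\theta$ is not its maximum, so $\theta$ does not begin with its minimum nor end with its maximum, and since every element of $\X$ of size at least two has a sign, $\theta$ must be negative.

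For uniqueness, I would argue that in any decomposition $\pi = a' \oplus \theta' \oplus b'$ with $\theta'$ negative, the value $a'$ is forced to equal the length of the maximal initial increasing run of $\pi$ and $b'$ the length of the maximal final increasing run: if $a'$ were strictly smaller than $a$, then $\theta'$ would begin with its own minimum and hence be positive, contradicting negativity of $\theta'$; and $a'$ cannot exceed $a$ by the definition of $a$ as maximal. A symmetric argument on the right end fixes $b'$, and once $a'$ and $b'$ are determined, $\theta'$ is determined as the middle block. Thus the triple $(a, \theta, b)$ is unique.

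I expect the main obstacle to be the careful bookkeeping around the boundary cases, namely confirming that $\theta$ is nonempty and has size at least two so that its sign is well defined, and ruling out the degenerate possibility that $\theta$ inadvertently inherits a positive sign from an under-counted prefix or suffix. The cleanest way to handle this is to lean on the inequality $0 < a+b < n-1$ established in the discussion above the observation, which simultaneously forces $\theta$ to be crossed-sized and prevents $\pi$ from collapsing into a purely monotone permutation; the sign-determination then reduces to the maximality of $a$ and $b$.
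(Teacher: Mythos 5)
Your proposal is correct and follows essentially the same route as the paper, whose ``proof'' is precisely the discussion preceding the observation: take the maximal increasing prefix $1\,2\cdots a$ and maximal increasing suffix $(n-b+1)\cdots n$, note the middle block must be negative by maximality, and uniqueness of $(a,b)$ forces uniqueness of $\theta$. Your write-up in fact supplies more detail than the paper (explicitly verifying the sign of $\theta$, the size bound, and the uniqueness argument), but the underlying decomposition and reasoning are identical.
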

%
%\begin{proof}
%There is a unique non-negative integer $a$ such that, for some $\gamma$, $\pi = a \oplus \gamma$ and $\gamma$ does not begin with its minimum element. Similarly, there is a unique non-negative integer $b$ such that for some $\tau$, $\pi = \tau \oplus b$ and $\tau$ does not end with its maximum element. Then, for some $\theta$, which is negative since it neither begins with its minimum nor ends with its maximum element, $\pi = a \oplus \theta \oplus b$. Conversely, if $\pi = a \oplus \theta \oplus b$ where $\theta$ is negative, then $a$ and $b$ must be as described in the first sentence of this proof, so uniqueness is also established.
%\end{proof}

\begin{figure}
\centerline{
\begin{tikzpicture}[scale=0.25]
\draw[ultra thin] (0,0) rectangle (20,20);
\draw[ultra thin] (2,2) rectangle (18,18);
\draw[ultra thin] (3.5, 3.5) rectangle (16.5,16.5);
\draw[ultra thin] (4.5, 4.5) rectangle (15.5,15.5);
\draw[ultra thin] (7.0,7.0) rectangle (13.0,13.0);
\draw[ultra thin] (9.0,9.0) rectangle (11.0,11.0);
\absdot{(0.5,19.5)}{};
\absdot{(1,19)}{};
\absdot{(1.5,18.5)}{};
\absdot{(19,1)}{};
\absdot{(19.5,0.5)}{};
\absdot{(2.5,2.5)}{};
\absdot{(3.0,3.0)}{};
\absdot{(4.0,16.0)}{};
\absdot{(16.0,4.0)}{};
\absdot {(5.0,5.0)}{};
\absdot {(5.5,5.5)}{};
\absdot {(6.0,6.0)}{};
\absdot {(6.5,6.5)}{};
\absdot {(15.0, 15.0)}{};
\absdot {(14.5,14.5)}{};
\absdot {(14.0,14.0)}{};
\absdot{(7.5,12.5)}{};
\absdot{(8.0,12.0)}{};
\absdot{(8.5,11.5)}{};
\absdot{(12.0,8.0)}{};
\absdot{(12.5,7.5)}{};
\absdot{(9.5,9.5)}{};
\absdot{(10.0,10.0)}{};
\absdot{(10.5,10.5)}{};
\end{tikzpicture}
}
\label{fig:x-example}
\caption{A crossed and negative element of $\X$ whose word representation is $(-3,-2) (2,0) (-1,-1) (4,3) (-3,-2) (3)$}
\end{figure}

Let $\A^+$ be the set of all ordered pairs of non-negative integers except $(0,0)$ and let $\A^-$ be the set of all ordered pairs of non-positive integers except $(0,0)$. In either case, define the size of a pair, $(a,b)$, to be $|a| + |b|$. Finally let $\M$ be the set of all integers of absolute value at least 2 and define the size of an element of  $\M$ to be its absolute value. The \emph{sign} of an element of $\A^+$, $\A^-$, or $\M$ is defined in the obvious way. Define the language $\L$ to consist of:
\begin{itemize}
  \item
  The symbol $1$, of size $1$, and
  \item
  all words $w_1 w_2 \dots w_n$ for $n \geq 1$ satisfying the following conditions:
  \begin{itemize}
    \item
    $w_n \in \M$,
    \item
    $w_1, w_2, \dots, w_{n-1} \in \A^+ \cup \A^-$, and
    \item
    the signs of $w_1$, $w_2$, \dots, $w_n$ alternate. 
    % That is, if $w_1$ is positive then all odd-indexed $w$'s are positive and all even-indexed $w$'s are negative, and vice versa if $w_1$ is negative.
  \end{itemize}
\end{itemize}
The size of a word covered by the second case is defined to be the sum of the sizes of its letters.

Define $\Phi: \L \to \X$ recursively as follows:
\begin{align*}
\Phi(1) &= 1 \\
\Phi(a) &= a & \text{if $a \in \M$} \\
\Phi((a,b) v) &= a \oplus \Phi(v) \oplus b & \text{if $(a,b) \in \A^+$} \\
\Phi((a,b) v) &= a \ominus \Phi(v) \ominus b & \text{if $(a,b) \in \A^-$} \\
\end{align*}

See Figure \ref{fig:x-example} for an example of the action of $\Phi$ which will also be helpful in understanding the observation below.

\begin{observation}
The function $\Phi$ is a size-preserving bijection.
\end{observation}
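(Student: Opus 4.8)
The plan is to treat size preservation and bijectivity separately, the first being routine and the second being the substance. Size preservation follows by a straightforward induction on the length of the word: in each recursive clause the contributions $|a|$ and $|b|$ of the leading letter, together with the size of $\Phi(v)$ (equal by induction to the size of $v$), add up to exactly the declared size of the word, and the two base cases are immediate. So from here on I focus on showing $\Phi$ is a bijection, which I will do by strong induction on size.

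First I would record the sign behaviour of $\Phi$, since this is what synchronizes the two recursions. I claim $\Phi$ sends the symbol $1$ to $1$; sends a single letter $a\in\M$ with $a\geq 2$ (resp.\ $a\leq -2$) to the monotone increasing (resp.\ decreasing) permutation of that size; sends every word whose first letter lies in $\A^+$ to a \emph{crossed positive} element of $\X$; and sends every word whose first letter lies in $\A^-$ to a \emph{crossed negative} element. The crossed–positive assertion holds because, writing $\Phi((a,b)v)=a\oplus\Phi(v)\oplus b$ with $(a,b)\in\A^+$, the alternation condition in the definition of $\L$ forces $v$ to begin with a negative letter, so by induction $\Phi(v)$ is negative and in particular contains a descent; together with $a+b>0$ (as $(a,b)\neq(0,0)$) this makes the image crossed and positive. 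The negative case is symmetric. This partitions $\L$ and $\X$ into five matching classes (the singleton $1$, plus monotone and crossed of each sign), and $\Phi$ respects the partition. That $\Phi$ lands in $\X$ at all is itself an induction using closure of $\X$ under $1\oplus(\cdot)$ and $(\cdot)\oplus 1$, iterated to build the runs $a$ and $b$.

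On the two monotone classes and the singleton $\Phi$ is visibly a bijection, so only the crossed classes remain, and there I would run the induction using Observation~\ref{obs:shell} together with its mirror image (valid because $\X$ is closed under all symmetries of $\S$). For surjectivity onto the crossed positive elements, take such a $\pi$; Observation~\ref{obs:shell} produces a unique negative $\theta\in\X$ and unique $a,b\geq 0$ with $a+b>0$ and $\pi=a\oplus\theta\oplus b$. Since $a+b>0$ we have $|\theta|<|\pi|$, so the inductive hypothesis gives a unique $v\in\L$ with $\Phi(v)=\theta$, and $v$ begins with a negative letter (or is a single negative element of $\M$); prepending $(a,b)\in\A^+$ yields a legal word of $\L$ mapping to $\pi$. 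For injectivity, if two words map to the same crossed positive permutation then both begin with $\A^+$ letters $(a,b)$ and $(a',b')$, and the uniqueness clause of Observation~\ref{obs:shell} forces $(a,b)=(a',b')$ and $\Phi(v)=\Phi(v')$, whence $v=v'$ by induction. The crossed negative case is identical under the mirror symmetry.

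The one genuinely load-bearing point — and the place where care is needed — is the interlock between the \emph{syntactic} sign-alternation built into the definition of $\L$ and the \emph{semantic} alternation of positive and negative shells guaranteed by Observation~\ref{obs:shell}. It is precisely the alternation axiom that makes $\Phi(v)$ negative in the clause above, which in turn is what licenses the uniqueness half of the observation; without it the shell decomposition $\pi=a\oplus\theta\oplus b$ would not be forced, and both injectivity and surjectivity would fail. Everything else (the size bookkeeping, the base cases, and verifying that the image is genuinely crossed rather than accidentally monotone) is routine.
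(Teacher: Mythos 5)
Your proposal is correct and follows essentially the same route as the paper's own proof: strong induction on size, with monotone permutations as base cases and Observation~\ref{obs:shell} (plus its symmetric version) driving both existence and uniqueness of the preimage in the crossed case. The only difference is cosmetic --- you state explicitly the sign/type classification (first letter in $\A^+$ $\Leftrightarrow$ crossed positive image, etc.) that the paper leaves implicit when it asserts ``$\Phi(w_2' \dots w_m')$ is negative,'' which is a reasonable bit of added care rather than a new idea.
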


\begin{proof}
Clearly, $\Phi$ is size-preserving. So, let $\omega \in \X$ be given. We provide an inductive proof that there is an element $w \in \L$ with $\Phi(w) = \omega$, and establish at the same time that $w$ is uniquely determined by $\omega$ which shows that $\Phi$ is a bijection.

The base cases are where $\omega$ is monotone. Since the signs of words in $\L$ having two or more letters alternate, no such word maps to a monotone permutation. On the other hand $\Phi(a) = a$ for $|a| \geq 2$ and $\Phi(1) = 1$ so each monotone permutation is the image of a unique element of $\L$.

If $\omega$ is positive and crossed (the negative case follows by symmetry of course) then, according to Observation \ref{obs:shell}, there is a unique pair, $(a,b)$,  of non-negative integers such that $\omega = a \oplus \nu \oplus b$ where $\nu$ is negative. By induction there is a unique $v \in \L$ whose first symbol is negative such that $\nu = \Phi(v)$. Then $\theta = \Phi((a,b) v)$. Conversely if $\Phi(w') = \omega$ and $w' = w_1' w_2' \dots w_m'$, then we must have $m \geq 2$ since $\omega$ is crossed. If $w_1' = (a', b')$ then, since: $\Phi(w') = a' \oplus \Phi(w_2' \dots w_m') \oplus b'$, $\Phi(w_2' \dots w_m')$ is negative, and Observation \ref{obs:shell}, it follows that $(a',b') = (a,b)$ and $\Phi(w_2' \dots w_m') = \nu$. By induction, $w' = w$ so $\Phi$ is one-to-one.
\end{proof}

Since $\Phi$ is a size-preserving bijection we freely identify $\L$ and $\X$. However, we would like to have an internal characterisation in $\L$ of the ordering that corresponds to the subpermutation ordering in $\X$ under this identification.

This is simple enough when we are looking for monotone subsequences of elements of $\X$. Given $a \geq 2$ and $t \in \L$, to check whether $a \leq \Phi(t)$ all we need to do is add up all the positive numbers occurring in letters of $t$ (in either coordinate, and in the element of $\M$ if it's positive) and add one more if the letter from $\M$ occurring in $t$ is negative. If the answer is $\geq a$ then $a \leq \Phi(t)$, and not otherwise. So, it remains to consider the crossed case.

In $\A^+$ and $\A^-$ we will take sums coordinatewise, i.e., $(a,b) + (c,d) = (a+c, b+d)$ in either case (but sums of letters of different types are not defined). The ordering on $\A^+$ and $\A^-$ is also to be taken coordinatewise but with respect to absolute value, i.e., $(a,b) \leq (c,d)$ if and only if $|a| \leq |c|$ and $|b| \leq |d|$. Let $w = w_1 w_2 \cdots w_k$ be any word over $\A^+ \cup \A^-$ (only the case of alternating signs will be of interest to us but for the following definition it's not required). The \emph{positive content} of $w$ is the sum of all its positive letters, and the \emph{negative content} of $w$ is the sum of all its negative letters.

Given $(a,b) \in \A^+$ and $w = w_1 w_2 \dots w_k w_{k+1} \in \L$, define the \emph{$(a,b)$-prefix} of $w$ to be the minimal prefix (if any) of $w$ whose positive content is greater than or equal to $(a,b)$ and define the \emph{$(a,b)$-remainder} of $w$ to be the remaining suffix after the $(a,b)$-prefix is removed -- both are undefined (denoted $\bottom$)  if $w$ has no prefix whose positive content is at least $(a,b)$.

The next proposition captures the essence of the argument that we can recognise involvement between crossed permutations and other elements of $\X$ greedily -- the uncrossed case has already been remarked upon above.

\begin{proposition}
Let negative $\tau, \theta \in \X$, $(a,b) \in \A^+$ and $(c,d)$ an arbitrary pair of non-negative integers  be given. Then
\[
a \oplus \theta \oplus b \leq c \oplus \tau \oplus d
\]
if and only if
\[
\max(0, a-c) \oplus \theta \oplus \max(0, b-d) \leq \tau.
\]
Moreover, if $\tau$ is monotone, then $a \oplus \theta \oplus b \not \leq \tau$, while if $\tau = c \ominus \nu \ominus d$ where $\nu$ is positive, then $a \oplus \theta \oplus b \leq \tau$ if and only if $a \oplus \theta \oplus b \leq \nu$.
\end{proposition}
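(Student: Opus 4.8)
The plan is to read both sides geometrically and to treat the biconditional as peeling the outer increasing shells $a,b$ of the left side against the outer shells $c,d$ of the right side; throughout write $C$ and $D$ for the bottom-left and top-right increasing blocks of sizes $c$ and $d$ in $c\oplus\tau\oplus d$, and set $a'=\max(0,a-c)$, $b'=\max(0,b-d)$. The implication from $a'\oplus\theta\oplus b'\le\tau$ to $a\oplus\theta\oplus b\le c\oplus\tau\oplus d$ is the easy half: given an embedding of $a'\oplus\theta\oplus b'$ into $\tau$, I would route the lowest $a-a'=\min(a,c)$ points of the bottom run into $C$, the highest $b-b'=\min(b,d)$ points of the top run into $D$, and everything else (the remaining $a'$ bottom points, all of $\theta$, the remaining $b'$ top points) through the given embedding into $\tau$. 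Because $C$ lies strictly below-left and $D$ strictly above-right of $\tau$ in both coordinates, these three pieces assemble into a single order-embedding.

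The content is in the reverse implication, which I would reduce to the structural claim that for \emph{any} embedding $\phi$ of $a\oplus\theta\oplus b$ into $c\oplus\tau\oplus d$ the preimage $\phi^{-1}(C)$ lies inside the bottom run and $\phi^{-1}(D)$ inside the top run. Since every non-$C$ point of $c\oplus\tau\oplus d$ is strictly above-right of every point of $C$, each point outside $\phi^{-1}(C)$ must map above-right of each point of $\phi^{-1}(C)$; pulling this back through the order-embedding shows that $\phi^{-1}(C)$ occupies an initial block of positions and the bottom block of values and is increasing, so that $a\oplus\theta\oplus b$ splits as a direct sum of the increasing run on $\phi^{-1}(C)$ and a remainder. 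The crux is that the longest leading increasing direct-summand of $a\oplus\theta\oplus b$ has length exactly $a$: a longer one would force $\theta$ to begin with its minimum, which is impossible for a negative $\theta$. Hence $|\phi^{-1}(C)|\le a$, and symmetrically $|\phi^{-1}(D)|\le b$ (a longer trailing summand would make $\theta$ end with its maximum). I expect pinning down these extremal monotone summands, while keeping the below-left/above-right bookkeeping consistent, to be the main obstacle. Granting it, $\theta$ together with all surviving run points maps into $\tau$, so $\tau$ contains $(a-|\phi^{-1}(C)|)\oplus\theta\oplus(b-|\phi^{-1}(D)|)$; since $|\phi^{-1}(C)|\le\min(a,c)$ and $|\phi^{-1}(D)|\le\min(b,d)$, this in turn contains $a'\oplus\theta\oplus b'$, which is exactly what is required.

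For the ``moreover'' clauses I would argue as follows. A monotone negative $\tau$ is decreasing, whereas $a\oplus\theta\oplus b$ contains an ascent whenever $a+b>0$ (pair a run point with an adjacent point of $\theta$), and a decreasing permutation has no ascent, so $a\oplus\theta\oplus b\not\le\tau$. If instead $\tau=c\ominus\nu\ominus d$ with $\nu$ positive, one direction is immediate since $\nu\le\tau$; for the other I would run the dual of the structural claim. The two corner shells of $\tau$ are now decreasing and sit top-left and bottom-right, so the preimage of each would be a decreasing skew-summand of $a\oplus\theta\oplus b$ occupying the extreme positions and values. But a positive permutation of size at least two neither begins with its maximum nor ends with its minimum, so $a\oplus\theta\oplus b$ admits no nonempty leading or trailing decreasing skew-summand; both preimages are therefore empty, the embedding lands entirely inside $\nu$, and $a\oplus\theta\oplus b\le\nu$ follows.
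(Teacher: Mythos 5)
Your proof is correct and establishes all three assertions, but it takes a genuinely different route from the paper's. The paper disposes of the proposition in two sentences by reducing it to one-point cancellation laws: $1 \oplus \alpha \leq 1 \oplus \beta$ if and only if $\alpha \leq \beta$ (plus the mirror statement for $\alpha \oplus 1$), applied repeatedly to peel the corner runs of pattern and target one element at a time, together with the fact that a permutation beginning with its minimum or ending with its maximum embeds in $1 \ominus \beta$ exactly when it embeds in $\beta$, which handles the skew case. You instead fix an arbitrary embedding of $a \oplus \theta \oplus b$ into $c \oplus \tau \oplus d$ and analyse it globally: the preimages of the two corner blocks are forced to be leading/trailing increasing direct summands, and negativity of $\theta$ (it can neither begin with its minimum nor end with its maximum) caps their lengths at $a$ and $b$; the dual sign argument (positivity of $a \oplus \theta \oplus b$) empties the preimages of the decreasing corner blocks in the final assertion. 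Both arguments turn on the same structural facts about signs, but yours is the more self-contained: the paper's two stated observations, read literally, only justify peeling steps while both sides still have points left in the corresponding run, and the residual case (e.g.\ $a < c$, where the pattern's run is exhausted first) tacitly needs the further fact that a permutation not beginning with its minimum embeds in $1 \oplus \beta$ iff it embeds in $\beta$ --- exactly the point your extremal-summand claim makes explicit. What the paper's formulation buys in exchange is brevity and the inductive, prefix-by-prefix form that feeds directly into the greedy factorisation underlying Corollary~\ref{cor:pref}.
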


\begin{proof}
The first part follows immediately from the observation that $1 \oplus \alpha \leq 1 \oplus \beta$ if and only of $\alpha \leq b$, and a symmetric variant for $\alpha \oplus 1$. Similarly, the non-trivial half of the second part follows from the observation that if $\alpha$ begins with its minimum or ends with its maximum, then $\alpha \leq 1 \ominus \beta$ if and only if $\alpha \leq \beta$ (and again, a symmetric variant for $\beta \ominus 1$.   
\end{proof}

\begin{corollary}
\label{cor:pref}
Let $w = (a,b) v \in \L$ and let $t \in \L$. Then $\Phi(w) \leq \Phi(t)$ if and only if the $(a,b)$-prefix of $t$ is defined and $\Phi(v) \leq \Phi(s)$ where $s$ is the $(a,b)$-suffix of $t$.
\end{corollary}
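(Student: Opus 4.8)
The plan is to induct on the number of letters of $t$, at each step removing the first letter $t_1$ of $t$ and invoking the appropriate half of the preceding Proposition. Write $\theta = \Phi(v)$; since $w = (a,b)v \in \L$ begins with the positive letter $(a,b) \in \A^+$, the alternation of signs guarantees that $\theta$ is negative, so $\Phi(w) = a \oplus \theta \oplus b$ is exactly a permutation of the form to which the Proposition applies. The base case is when $t$ is a single letter: then $t \in \M$, so $\Phi(t)$ is monotone and the crossed permutation $\Phi(w)$ cannot embed, making the left-hand side false; on the right, $t$ has no letter of $\A^+$, so its positive content is $(0,0) \not\geq (a,b)$, the $(a,b)$-prefix is undefined, and the right-hand side is false as well.

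For the inductive step take $t = t_1 \cdots t_m$ with $m \geq 2$, and split on the sign of $t_1$, using that the alternation of signs in $\L$ forces $\Phi(t_2 \cdots t_m)$ into the sign class each half of the Proposition requires. If $t_1 = (c,d) \in \A^+$, then $\Phi(t) = c \oplus \Phi(t_2 \cdots t_m) \oplus d$ with $\tau := \Phi(t_2 \cdots t_m)$ negative, and the first part of the Proposition gives $\Phi(w) \leq \Phi(t)$ iff $\max(0,a-c) \oplus \theta \oplus \max(0,b-d) \leq \tau$. When $(c,d) \geq (a,b)$ the reduced pair is $(0,0)$, this reads $\theta \leq \Phi(t_2 \cdots t_m)$, and the $(a,b)$-prefix of $t$ is exactly $t_1$ with remainder $t_2 \cdots t_m$, matching the claim directly; otherwise the reduced pair lies in $\A^+$ and I apply the inductive hypothesis to it and to the shorter word $t_2 \cdots t_m$. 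If instead $t_1 \in \A^-$, then $\Phi(t) = c \ominus \Phi(t_2 \cdots t_m) \ominus d$ where $-(c,d) = t_1$, and $\nu := \Phi(t_2 \cdots t_m)$ is positive of size at least two; the second part of the Proposition yields $\Phi(w) \leq \Phi(t)$ iff $\Phi(w) \leq \nu = \Phi(t_2 \cdots t_m)$, to which I again apply the inductive hypothesis with the pair $(a,b)$ unchanged.

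In both inductive cases what remains is to match the prefix bookkeeping, which is the real content of the argument. The key arithmetic fact is that for a pair $X$ of non-negative integers, $(c,d) + X \geq (a,b)$ holds iff $X \geq (\max(0,a-c), \max(0,b-d))$, using that $X$ has non-negative coordinates. Since a negative letter contributes $(0,0)$ to positive content, the positive content of $t_1 \cdots t_j$ equals $t_1$ plus the positive content of $t_2 \cdots t_j$ when $t_1$ is positive, and equals the positive content of $t_2 \cdots t_j$ when $t_1$ is negative. Combining this with the displayed equivalence shows that the minimal prefix of $t$ attaining $(a,b)$ is obtained by prepending $t_1$ to the minimal prefix of $t_2 \cdots t_m$ attaining the reduced pair, that the two notions of being defined agree, and that the remainders coincide; the inductive hypothesis then closes the equivalence. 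One also checks that in the recursive branch the remainder is never empty, since any attainable positive content is already reached among the $\A^{\pm}$-letters, so at least the terminal letter of $\M$ survives into the remainder $s$, keeping $\Phi(s)$ a genuine element of $\X$.

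I expect the main obstacle to be precisely this prefix bookkeeping: ensuring that the coordinatewise flooring by $\max(0,\cdot)$ produced by the Proposition is faithfully tracked by the coordinatewise threshold defining the minimal prefix, and handling the terminal letter of $\M$ (which contributes nothing to positive content) so that the suffix $s$ on which $\theta$ must be located is always a nonempty word of $\L$. A secondary point demanding care is confirming at each peeling step that the sign alternation places $\Phi(t_2 \cdots t_m)$ in the sign class required by whichever half of the Proposition is being used, so that its hypotheses are genuinely met.
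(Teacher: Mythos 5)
Your proof is correct and follows essentially the same route as the paper: the paper states this corollary without any explicit proof, treating it as an immediate consequence of the preceding Proposition, and your induction on the length of $t$ --- peeling off the first letter, invoking the appropriate half of the Proposition according to its sign, and matching the reduced pair $(\max(0,a-c),\max(0,b-d))$ against the positive-content threshold defining the $(a,b)$-prefix --- is exactly the argument the paper leaves implicit. The side points you verify (sign alternation guaranteeing the hypotheses of each half of the Proposition, and the terminal letter from $\M$ never being absorbed into the prefix, so the remainder is a genuine nonempty word of $\L$) are handled correctly.
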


For $(a,b) \in \A^+ \cup \A^-$ set $P(a,b)$ to be the set of all words $w_1 w_2 \dots w_k$ over the alphabet $\A^+ \cup \A^-$ that satisfy:
\begin{itemize}
  \item $w_1$ has the same sign as $(a,b)$,
  \item the $w_i$ alternate in sign,
  \item the (sign of $w_1$) content of $w_1 w_2 \dots w_k$ is at least $(a,b)$, and
  \item the (sign of $w_1$) content of $w_1 w_2 \dots w_{k-1}$ is \emph{not} at least $(a,b)$.
\end{itemize}
Except for the sign restriction on $w_1$, the elements of $P(a,b)$ are exactly the candidates for being an $(a,b)$-prefix. Note that it is a consequence of the last two conditions that the sign of $w_k$ agrees with that of $w_1$ (i.e, $k$ is odd). Let $F_{(a,b)}(x)$ be the generating function for the set $P(a,b)$ enumerated by weight.

For a letter $a \in \M$, let $M_a(x)$ be the generating function for the set of all words whose corresponding permutations contain $a$.

Applying Corollary \ref{cor:pref} inductively we see that if $w = w_1 w_2 \dots w_n \in \L$, with $w_1$ positive and $t \in \L$ then $\Phi(w) \leq \Phi(t)$ if and only if $t$ can be factored as the concatenation of: an optional negative letter, followed by elements of $P(w_1)$, $P(w_2)$, through $P(w_{n-1})$ followed by a word that represents a permutation having a monotone subsequence $w_n$. Therefore, the generating function for the permutations containing $\Phi(w)$ is:
\[
\frac{1}{(1-x)^2} \left( \prod_{i=1}^{n-1} F_{w_i}(x) \right) M_{w_n}(x).
\]

By obvious symmetries $F_{(a,b)} = F_{(b,a)} = F_{(-a,-b)} = F_{(-b,-a)}$, and $M_a = M_{-a}$ so we obtain:

\begin{theorem}
  \label{thm:xcollapse}
  If $w = w_1 w_2 \dots w_n$ and $v = v_1 v_2 \dots v_n$ belong to $\L$, and there is a permutation $\rho : [n-1] \to [n-1]$ such that $w_i$ and $v_{\rho(i)}$ differ only in their sign and/or the order of their coordinates, and if $w_n = \pm v_n$ then the permutations $\Phi(w)$ and $\Phi(v)$ are Wilf-equivalent in $\X$.
\end{theorem}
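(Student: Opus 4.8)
The plan is to invoke the equivalent characterisation of Wilf-equivalence recorded in the introduction: $\Phi(w)$ and $\Phi(v)$ are Wilf-equivalent in $\X$ exactly when the generating functions of $\X\cap\Inv(\Phi(w))$ and $\X\cap\Inv(\Phi(v))$ coincide. Everything therefore reduces to comparing these two series, and for each of them we already have the explicit factorisation
\[
\frac{1}{(1-x)^2}\left(\prod_{i=1}^{n-1} F_{w_i}(x)\right) M_{w_n}(x)
\]
derived immediately above the statement. So there is no genuine combinatorics left to do; the content has been front-loaded into that formula, and the proof is a term-by-term comparison.

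First I would dispose of a bookkeeping point about signs. The factorisation was obtained under the assumption that the leading letter is positive. Since $\X$ is closed under all the symmetries of $\S$, and these symmetries preserve Wilf-equivalence (a symmetry maps $\X\cap\Inv(\alpha)$ bijectively and size-preservingly onto $\X\cap\Inv$ of the image of $\alpha$), I may assume without loss of generality that $w_1$ and $v_1$ are both positive, applying the complement operation to whichever of $w,v$ begins with a negative letter. Alternatively one checks directly that the optional leading letter contributes the factor $1/(1-x)^2$ whatever its sign, and that $F_{(a,b)}=F_{(-a,-b)}$, so the factorisation holds verbatim for either sign of the first letter; I would note this so that no real case split is needed.

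The core of the argument then uses only the symmetry relations $F_{(a,b)} = F_{(b,a)} = F_{(-a,-b)} = F_{(-b,-a)}$ and $M_a = M_{-a}$. The hypothesis that $w_i$ and $v_{\rho(i)}$ differ only in sign and/or in the order of their coordinates says precisely that these two letters lie in one orbit of the symmetries above, so $F_{w_i}(x) = F_{v_{\rho(i)}}(x)$ for every $i\in[n-1]$. Because $\rho$ is a bijection of $[n-1]$, re-indexing the product by $j=\rho(i)$ gives
\[
\prod_{i=1}^{n-1} F_{w_i}(x) = \prod_{i=1}^{n-1} F_{v_{\rho(i)}}(x) = \prod_{j=1}^{n-1} F_{v_j}(x).
\]
Finally $w_n = \pm v_n$ yields $M_{w_n}(x) = M_{v_n}(x)$, and multiplying through by the common prefactor $1/(1-x)^2$ shows the two generating functions are identical; hence $\Phi(w)$ and $\Phi(v)$ are Wilf-equivalent.

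I do not expect a hard step here. The only places requiring a little care are ensuring the factorisation applies to both words regardless of the sign of their leading letters (which is why I would settle that reduction explicitly at the outset), and the re-indexing of the product by $\rho$, which is the single point where the full strength of the hypothesis is used and is entirely routine.
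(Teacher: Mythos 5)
Your proposal is correct and follows essentially the same route as the paper, which states the theorem as an immediate consequence of the factorisation $\frac{1}{(1-x)^2}\bigl(\prod_{i=1}^{n-1} F_{w_i}(x)\bigr) M_{w_n}(x)$ together with the symmetries $F_{(a,b)} = F_{(b,a)} = F_{(-a,-b)} = F_{(-b,-a)}$ and $M_a = M_{-a}$. Your explicit treatment of the sign of the leading letter and the re-indexing of the product by $\rho$ merely spells out details the paper leaves tacit.
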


The generating function for $\X$ (including the empty permutation) is 
\[
\frac{1 - 2x}{1-4x+2x^2}
\]
and the growth rate of the class is therefore $2 + \sqrt{2}$.

On the other hand, Theorem \ref{thm:xcollapse} implies that the number of Wilf-equivalence classes in $\X$ for structures of size $N \geqslant 2$ is dominated by the number of structures of size $N$ in another universe. In this universe, structures are made up of a multiset of atoms $(a,b)$ with $0 \leqslant a \leqslant b$ of weight $a+b$ and a single atom $m \geqslant 2$ of weight $m$. The size of a structure is the sum of the weights of its elements. Effectively then those structures correspond to coloured partitions where there are $\lceil (i+1)/2 \rceil$ different colours for parts of size $i$ corresponding to the pairs $(a,b)$ with $0 \leq a \leq b$ and $a + b = i$, together with a single part of size at least two. The generating function for this universe is:
%The generating function for $\X$ (including the empty permutation) is 
%\[
%\frac{1 - 2x}{1-4x+2x^2}
%\]
%and the growth rate of the class is therefore $2 + \sqrt{2}$. But, by Theorem \ref{thm:xcollapse}, the generating function for the number of Wilf-equivalence classes in $\X$ is dominated, for permutations of size at least 2, by that for partitions where there are $\lceil (i+1)/2 \rceil$ different ``colours'' for parts of size $i$ (these correspond to the pairs $(a,b)$ with $0 \leq a \leq b$ and $a + b = i$ which represent the equivalence classes of the letters in $\A^+ \cup \A^-$ of size $i$) together with a single part of size at least two (the monotone part). This generating function is:
\[
\frac{x^2}{1-x} \prod_{i=1}^{\infty} \left( \frac{1}{1-x^i} \right)^{\lceil (i+1)/2 \rceil}.
\]
After taking logarithms it is easy to verify that this converges for all $0 \leq x < 1$. By Pringsheim's theorem, the radius of convergence of the generating function for Wilf-equivalence classes in $\X$ is 1 (it certainly can't be greater than 1). In particular, the number of such Wilf-equivalence classes is $o(c^n)$ for all $c > 1$ 
%(more detailed asymptotics could be computed using the related sequence \href{https://oeis.org/A005380}{\textcolor{red}{OEIS A005830}}). 

Summarising:

\begin{theorem}
\label{thm:xwegf}
  Let $x_n = |\X_n|$ and let $w_n$ be the number of Wilf-equivalence classes in $\X_n$. Then,
  \[
  x_n^{1/n} \to 2 + \sqrt{2} \quad \text{and} \quad w_n^{1/n} \to 1 \quad \text{as} \quad n \to \infty.
  \]
\end{theorem}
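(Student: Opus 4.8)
The plan is to prove the two limits separately, each by a standard singularity-analysis argument applied to a generating function that has already been assembled in the discussion above; the statement is essentially a bookkeeping summary of what precedes it.

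For the first limit I would argue directly from the rational generating function $\tfrac{1-2x}{1-4x+2x^2}$ of $\X$. The idea is to locate its dominant singularity: the denominator $1-4x+2x^2$ has roots $1\pm\tfrac{1}{\sqrt2}$, of which $r=1-\tfrac{1}{\sqrt2}$ is the one of smallest modulus, and the numerator $1-2x$ does not vanish there, so $r$ is a simple pole. By the elementary theory of coefficients of rational functions (partial fractions, or equivalently Pringsheim's theorem together with the transfer for a simple pole), $x_n$ is asymptotic to a positive constant times $r^{-n}$, whence $x_n^{1/n}\to r^{-1}$. Rationalising, $r^{-1}=(1-\tfrac{1}{\sqrt2})^{-1}=2+\sqrt2$, which gives the first claim.

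For the second limit, the crucial step is the upper bound supplied by Theorem \ref{thm:xcollapse}: each Wilf-equivalence class of size $n\ge 2$ is determined by the multiset of its non-terminal letters, recorded up to sign and up to the order of coordinates, together with the absolute value of its single terminal monotone atom $m\ge 2$. Hence $w_n\le p_n$, where $p_n$ counts exactly the coloured partitions whose generating function is $\tfrac{x^2}{1-x}\prod_{i\ge1}(1-x^i)^{-\lceil(i+1)/2\rceil}$. I would then show this product has radius of convergence exactly $1$. Taking logarithms turns it into $\sum_{i\ge1}\lceil(i+1)/2\rceil\bigl(-\log(1-x^i)\bigr)$; since $-\log(1-x^i)\le \tfrac{x^i}{1-x^i}\le \tfrac{x^i}{1-x}$ and the weights $\lceil(i+1)/2\rceil$ grow only linearly in $i$, the series is dominated by $\tfrac{1}{1-x}\sum_i (i+1)x^i$ and so converges for every $0\le x<1$; this gives radius at least $1$. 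The radius cannot exceed $1$ because the factor $\tfrac{x^2}{1-x}$ alone already forces a singularity at $x=1$. As all coefficients are nonnegative, Cauchy--Hadamard (or Pringsheim) then yields $\limsup_n p_n^{1/n}=1$, and therefore $\limsup_n w_n^{1/n}\le1$.

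Finally I would upgrade this $\limsup$ to a genuine two-sided limit and transfer it to $w_n$. The lower bound is trivial: for $n\ge2$ the lone atom $m=n$ is a valid structure, so $p_n\ge1$ and likewise $w_n\ge1$, giving $\liminf_n w_n^{1/n}\ge1$. Combined with $1\le w_n\le p_n$ and $\limsup_n p_n^{1/n}\le1$, a squeeze yields $w_n^{1/n}\to1$. I expect the main obstacle here to be not any single hard estimate but the care required to pass from the \emph{radius of convergence} of the bounding product (which by itself controls only the $\limsup$) to the genuine statement $w_n^{1/n}\to1$; this is precisely the point at which the trivial lower bound $w_n\ge1$ and the convergence of the product throughout $[0,1)$ must be combined, and it is also where one must be honest that $w_n\le p_n$ is only an inequality, so the argument delivers the limit of $w_n^{1/n}$ without needing the exact value of $w_n$.
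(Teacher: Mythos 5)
Your proposal is correct and follows essentially the same route as the paper, whose proof is precisely the discussion preceding the theorem statement: the growth rate $2+\sqrt{2}$ is read off from the dominant pole of the rational generating function $\frac{1-2x}{1-4x+2x^2}$, and the bound $w_n \le p_n$ from Theorem~\ref{thm:xcollapse} is combined with the convergence on $[0,1)$ of the coloured-partition product to force the radius of convergence of the Wilf-class generating function to be exactly $1$. The only difference is that you spell out the routine details the paper leaves implicit (locating the pole, the logarithmic estimate on the product, and the squeeze using $w_n \ge 1$), which is a fair reading rather than a different method.
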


The rules for Wilf-equivalence in $\X$ given by Theorem \ref{thm:xcollapse} are by no means complete. For instance, it is easy to check by considering avoidance that if $\theta$ and $\tau$ are Wilf-equivalent in $\X$ and neither ends with its maximum, then $1 \oplus \theta$ and $1 \oplus \tau$ are also Wilf-equivalent (see Figure \ref{fig:av1ptheta} for a ``proof without words''). So, for instance $1243$ and $1342$ are Wilf-equivalent in $\X$.

\begin{figure}
  \centerline{
  \begin{tikzpicture}[scale=0.6]
      \draw (-4,-4) rectangle (4,4);
%      \draw (-14,-4) rectangle (-6,4);
      \draw[thick] (-3.9, 3.9) -- (-3.1,3.1);
      \draw[thick] (3.9, 3.9) -- (3.1,3.1);
      \draw[thick] (3.9, -3.9) -- (3.1,-3.1);
      \draw (-3, -4) -- (-3,4);
      \draw (3, -4) -- (3,4);
      \draw (-4, -3) -- (4,-3);
      \draw (-4, 3) -- (4,3);
      \fill (-2.8,-2.8) circle (0.1);
      \node (A) at (0,1) {$\Av(\theta)$};
      \node (B) at (0,0) {or};
      \node (C) at (0,-1) {$\Av(\tau)$};
%      \node (D) at (-10,1) {$\Av(1 \oplus \theta)$};
%      \node (E) at (-10,0) {or};
%      \node (F) at (-10,-1) {$\Av(1 \oplus \tau)$};
%      \node (G) at (-5, 0) {=};
      \draw (-2.6, -2.6) rectangle (2.6, 2.6);      
  \end{tikzpicture}  
  }
  \caption{A proof by pictures that, if if $\theta$ and $\tau$ are Wilf-equivalent in $\X$ and neither ends with its maximum, then $1 \oplus \theta$ and $1 \oplus \tau$ are also Wilf-equivalent. The large box represents an element of either $\Av(1 \oplus \theta)$ or $\Av(1 \oplus \tau)$. If the element indicated by a $\bullet$ is not present, then the permutation avoids both. If it is present, then the central region must avoid $\theta$ (respectively, $\tau$) and the bijection witnessing their Wilf-equivalence can be applied to it.}
  \label{fig:av1ptheta}
\end{figure}

\section{Sum closure of increasing oscillations}

%So, basically all of the constructions of grs are based on inc osc.
%My paper:  Permutation classes of every growth rate above $2.48188$
%My paper with Jay:  Growth rates of permutation classes: categorization up to the uncountability threshold
%David's paper:  Intervals of permutation class growth rates
%So this is an interesting class to be looking at.
%They also show up in the study of juggling: https://www.jstor.org/stable/2975316
%Finally, in computational molecular biology, they are known as Gollan permutations, because they are the hardest permutations to sort by reversals. This was first proved by Bafna and Pevzner: http://dx.doi.org/10.1137/S0097539793250627
%Is that enough for now?

We now consider an important class $\SIO$ ($\S$ubpermutations of the $\I$ncreasing $\mathcal{O}$scillation) to which the previous conditions do not apply and yet we can still demonstrate an exponential Wilf-collapse. This class consists of all finite permutations whose pattern can be found in the \emph{infinite increasing oscillation}:
\[
 2 ,  \, 4 , \, 1 , \, 6 , \, 3 , \, 8 , \, 5 , \, 10 , \, 7 , \, \dots
\]
This sequence (and various minor modifications of it) arises in a wide variety of applications ranging from the recreational e.g., the theory of juggling \cite{Buhler} to the more serious in that it provides those sequences that are most difficult to sort by block reversals \cite{BafnaPevzner}. The class $\SIO$ also plays a central role in the study of growth rates of permutation classes providing a fundamental building block for the construction of intervals in the set of achievable growth rates and other threshold phenomena \cite{Bevan, PantoneVatter, Vatter-above}. The class $\SIO$ is invariant under four of the symmetries for permutations in general: the identity, reverse-complement, inverse, and the composition of reverse-complement with inverse.

The \emph{inversion graph} of a sequence or permutation $\pi$ is the graph on the indices of the sequence where, for $i < j$, $i$ and $j$ are adjacent if $\pi_i > \pi_j$, i.e., if the pair $(i,j)$ forms an inversion in the sequence.
The inversion graph of the infinite increasing oscillation is a (one-ended) infinite path and the finite permutations in the class $\SIO$ have sum decompositions whose components have inversion graphs that are paths (possibly consisting only of a single element). For each path of length at least three there are two permutations whose inversion graph is that path and both of these occur in $\SIO$. So the alphabet $\A$ of sum-indecomposables for the class $\SIO$ consists of: a symbol $\a$ representing the singleton permutation ($\a$), a symbol $\b$ representing 21, and for $k \geq 3$ two symbols $\w_k$ and $\m_k$ representing the two different permutations of size $k$ whose inversion graph is a path. The first few of these are shown explicitly in Figure \ref{fig-sum-indec-in-inc-osc} along with a convenient symbolic representation of each. 

In the sum-indecomposables of size at least 3 we define the \emph{Start} of that sum indecomposable to be ``Up'' ($\nearrow$) if the initial element of the permutation corresponds to an interior vertex of the inversion graph, and ``Down'' ($\searrow$) if it is a terminal vertex. The corresponding definition for \emph{Finish} is that it is $\nearrow$ if the final element of the permutation corresponds to an interior vertex of the inversion graph, and $\searrow$ if it is a terminal vertex. Start and Finish are ``Undefined'' ($\bottom$) for the permutations $1$ or $21$. 

Since every permutation in $\SIO$ is uniquely represented as a sum of sum-indecomposables and all such sums belong to $\SIO$ we freely identify $\SIO$ with words over the alphabet $\A$. We further define the Start of a word to be the Start of its first letter (possibly undefined) and its Finish to be the Finish of its terminal letter. The \emph{type} of a word is the pair consisting of its Start and Finish.

\begin{figure}
\begin{tabular}{cccccc}
Sum indecomposable & Diagram & Symbol & Letter & Start & Finish \\ \hline
1 &
\begin{tikzpicture}[scale=0.25]
\plotperm{1}
\end{tikzpicture}
&
\begin{tikzpicture}[scale=0.5]
\absdot{(0,0)}{}
\end{tikzpicture}
&
$\a$ 
&
$\bottom$
&
$\bottom$
\\
21 &
\begin{tikzpicture}[scale=0.25]
\plotperm{2,1}
\draw[dashed] (1,2) -- (2,1);
\end{tikzpicture}
&
\begin{tikzpicture}[scale=0.5]
\draw[thick] (0,1) -- (0,0) ;
\end{tikzpicture}
&
$\b$ 
&
$\bottom$
&
$\bottom$
\\
231 &
\begin{tikzpicture}[scale=0.25]
\plotperm{2,3,1}
\draw[dashed] (1,2) -- (3,1) -- (2,3);
\end{tikzpicture}
&
\begin{tikzpicture}[scale=0.5]
\draw[thick] (0,1) -- (1,0) -- (2,1);
\end{tikzpicture}
&
$\w_3$ 
&
$\searrow$
&
$\nearrow$
\\
312 &
\begin{tikzpicture}[scale=0.25]
\plotperm{3,1,2}
\draw[dashed] (2,1) -- (1,3) -- (3,2);
\end{tikzpicture}
&
\begin{tikzpicture}[scale=0.5]
\draw[thick] (0,0) -- (1,1) -- (2,0);
\end{tikzpicture}
&
$\m_3$
&
$\nearrow$
&
$\searrow$
\\
2413 &
\begin{tikzpicture}[scale=0.25]
\plotperm{2,4,1,3}
\draw[dashed] (1,2) -- (3,1) -- (2,4) -- (4,3);
\end{tikzpicture}
&
\begin{tikzpicture}[scale=0.5]
\draw[thick] (0,1) -- (1,0) -- (2,1) -- (3,0);
\end{tikzpicture}
&
$\w_4$
&
$\searrow$
&
$\searrow$
\\
3142 &
\begin{tikzpicture}[scale=0.25]
\plotperm{3,1,4,2}
\draw[dashed] (2,1) -- (1,3) -- (4,2) -- (3,4);
\end{tikzpicture}
&
\begin{tikzpicture}[scale=0.5]
\draw[thick] (0,0) -- (1,1) -- (2,0) -- (3,1);
\end{tikzpicture}
&
$\m_4$
&
$\nearrow$
&
$\nearrow$
\\
24153 &
\begin{tikzpicture}[scale=0.25]
\plotperm{2,4,1,5,3}
\draw[dashed] (1,2) -- (3,1) -- (2,4) -- (5,3) -- (4,5);
\end{tikzpicture}
&
\begin{tikzpicture}[scale=0.5]
\draw[thick] (0,1) -- (1,0) -- (2,1) -- (3,0) -- (4,1);
\end{tikzpicture}
&
$\w_5$
&
$\searrow$
&
$\nearrow$
\\
31524 &
\begin{tikzpicture}[scale=0.25]
\plotperm{3,1,5,2,4}
\draw[dashed] (2,1) -- (1,3) -- (4,2) -- (3,5) -- (5,4);
\end{tikzpicture}
&
\begin{tikzpicture}[scale=0.5]
\draw[thick] (0,0) -- (1,1) -- (2,0) -- (3,1) -- (4,0);
\end{tikzpicture}
&
$\m_5$
&
$\nearrow$
&
$\searrow$
\end{tabular}
\caption{
The sum indecomposables of size at most 5 in the class $\SIO$ together with their diagrams, symbolic representation, representative letters of the alphabet $\A$ and start and finish type (defined in text).
}
\label{fig-sum-indec-in-inc-osc}
\end{figure}

The symbolic representation of permutations in $\SIO$ makes it particularly easy to identify when one permutation is involved in another -- more specifically when a word is involved in a single letter. The basic idea is that you must try to pack the individual parts of the word into the letter (and may do so greedily from left to right) using the following conventions:
\begin{itemize}
\item
The letter $\a$ can be matched to any single vertex,
\item
The letter $b$ can be matched to any edge,
\item
Any letter $\w_k$ or $\m_k$ can be matched to a connected sequence of $k-1$ edges that starts with an edge of the appropriate  slope.
\end{itemize}
All this is subject to the condition that the parts of the symbol matching consecutive letters cannot be connected by an edge. For instance the relationship $\w_3 \a \m_4 \b \leq \m_{16}$ (and implicitly $\w_3 \a \m_4 \b \not\leq \m_{14}$) is shown by the thicker edges (and dot) below:

\centerline{
\begin{tikzpicture}[scale = 0.5]
\draw (0,0) -- (1,1) -- (2,0) -- (3,1) -- (4,0) -- (5,1) -- (6,0) -- (7,1) -- (8,0) -- (9,1) -- (10,0) -- (11,1) -- (12,0) -- (13,1) -- (14,0) -- (15,1);
\draw[ultra thick, red] (1,1)--(2,0)--(3,1);
\draw[red, fill] (5,1) circle (0.15);
\draw[ultra thick, red] (8,0)--(9,1)--(10,0)--(11,1);
\draw[ultra thick, red] (13,1)--(14,0);
\end{tikzpicture}}

The symbolic viewpoint also makes it easy to understand the effect on $\SIO$ of the symmetries: reverse-complement, inverse, and their composition. Respectively, they correspond to: rotation by $180^{\circ}$, reflection in a horizontal axis, and reflection in a vertical axis.

\begin{observation}
The class $\SIO$ is supercritical since the generating function for its sum-closed permutations is:
\[
x + x^2 + 2 x^3/(1-x).
\]
However, it has no incompatible pairs since for any pair of plus-indecomposable permutations $\pi, \theta \in \SIO$, $\pi \oplus \theta$ occurs as a pattern in a sum-indecomposable of size at most $|\pi| + |\theta| + 2$.
\end{observation}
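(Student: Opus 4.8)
The plan is to treat the two assertions separately, and for each to reduce it to the structural facts about $\SIO$ already set up above.

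For supercriticality, I would first justify the stated generating function as the series for the \emph{sum-indecomposables} of $\SIO$. By the classification of the alphabet $\A$, the sum-indecomposable elements of $\SIO$ are exactly the singleton $\a$, the permutation $21=\b$, and, for each $k\geq 3$, the two permutations $\w_k$ and $\m_k$ whose inversion graph is a path on $k$ vertices. Counting these by size gives
\[
 I(x) = x + x^2 + \sum_{k\geq 3} 2x^k = x + x^2 + \frac{2x^3}{1-x},
\]
which is analytic on the open disk $|x|<1$ and satisfies $I(x)\to +\infty$ as $x\to 1^-$. Since $\SIO$ is sum-closed and every permutation factors uniquely as a sum of sum-indecomposables, its ordinary generating function (counting the empty permutation) is $1/(1-I(x))$. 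As $I$ is continuous and strictly increasing on $[0,1)$ with $I(0)=0$, there is a unique $x_0\in(0,1)$ with $I(x_0)=1$; clearing denominators shows $x_0$ is the least positive root of $x^3+2x-1$. For $x<x_0$ one has $1-I(x)>0$, while $1-I$ vanishes at $x_0$, so $1/(1-I(x))$ has radius of convergence exactly $x_0<1$, whereas $I$ itself has radius of convergence $1$. Because the radius of convergence of the sum-indecomposable series ($=1$) strictly exceeds that of the class ($=x_0$), $\SIO$ is supercritical by definition.

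For the absence of incompatible pairs I would prove the sharper statement directly: for sum-indecomposable $\pi,\theta\in\SIO$ of sizes $p$ and $q$, the direct sum $\pi\oplus\theta$ embeds into a single sum-indecomposable $L\in\SIO$ of size at most $p+q+2$; since one of the two orders already suffices, this rules out incompatibility. Working in the diagram (zigzag) representation, I would take $L$ to be a long increasing oscillation, i.e.\ a letter $\w_K$ or $\m_K$, place $\pi$ on an initial run of $p-1$ consecutive edges and $\theta$ on a later run of $q-1$ edges (a single vertex or single edge when $\pi$ or $\theta$ equals $\a$ or $\b$), and choose the overall orientation of $L$ so that its first run begins with the slope demanded by $\pi$'s type. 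The two runs must be separated by a gap so that no edge of $L$ joins them, and the greedy packing description of containment then certifies $\pi\oplus\theta\leq L$.

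The crucial geometric point, which I would verify from the explicit values of the increasing oscillation, is that a run occurring strictly later along the inversion path and disconnected from an earlier run lies entirely above and to the right of it, so that the two runs genuinely assemble into a \emph{direct} sum rather than a skew sum or an interleaving; granting this, a single skipped vertex already forces the direct-sum relation. It then remains to reconcile slopes: once $\pi$ is pinned at the start of $L$, the slope available at the beginning of the second run is fixed by parity, and shifting the start of $\theta$'s run by one vertex flips it, so at most one extra intermediate vertex is needed to make that run begin with the slope required by $\theta$'s type. Hence the gap costs one or two vertices and $|L|\leq p+q+2$. I expect the main obstacle to be precisely this geometric lemma — that ``later along the path and disconnected'' is equivalent to ``forms a direct sum'' in the increasing oscillation — since the positions and values of path-vertices agree with their path-index only up to an off-by-one error that the gap must absorb; the slope/parity bookkeeping yielding the sharp $+2$, together with the degenerate cases $\pi,\theta\in\{\a,\b\}$ where no slope constraint applies, is then routine.
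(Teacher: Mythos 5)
Your proposal is correct, and since the paper states this as an unproved observation (with its justification folded into the ``since'' clauses), your argument simply supplies the details the authors left implicit, in the way they clearly intend. You rightly read ``sum-closed permutations'' as a slip for ``sum-indecomposable permutations'': the series $x+x^2+2x^3/(1-x)$ counts $\a$, $\b$, and the pairs $\w_k,\m_k$, its radius of convergence $1$ strictly exceeds the radius $x_0<1$ of $1/(1-I(x))$ (the least positive root of $x^3+2x-1$), which is exactly the paper's definition of supercritical; and your packing of $\pi$ and $\theta$ into a long letter $\w_K$ or $\m_K$ with a one- or two-vertex gap (the extra vertex absorbing the slope/parity constraint, and the gap guaranteeing the two runs form a direct sum) is precisely the construction certified by the paper's greedy containment rule, giving the stated bound $|\pi|+|\theta|+2$.
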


Define an equivalence relation $\sim$ on $\A^*$ by $X \sim Y$ if and only if there is a size and type-preserving bijection $\Phi : \SIO \to \SIO$ such that the image of $\SIO \cap \Av(X)$ is $\SIO \cap \Av(Y)$ (where again as usual we make no distinction between the words $X$ and $Y$ and the permutations they represent). In particular if $X \sim Y$ then $X \we_{\SIO} Y$. So, if we can demonstrate a significant collapse for the $\sim$ equivalence relation on $\SIO$ then we will have demonstrated a Wilf-collapse in $\SIO$ as well.

\begin{lemma}
\label{symmetry}
Let $X \in \A^*$ and a symmetry $\sigma$ be given such that both the Start and Finish of $X$ are defined and equal the Start and Finish of $\sigma(X)$. Then $X \sim \sigma(X)$
\end{lemma}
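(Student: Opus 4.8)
The obvious candidate for the witnessing bijection is $\sigma$ itself. Each of the eight symmetries of $\S$ is an order-automorphism, and the four that preserve $\SIO$ therefore restrict to size-preserving bijections of $\SIO$; since $\pi\in\Av(X)$ exactly when $\sigma(\pi)\in\Av(\sigma(X))$, the map $\sigma$ carries $\SIO\cap\Av(X)$ bijectively onto $\SIO\cap\Av(\sigma(X))$. Thus $\sigma$ already fulfils every clause in the definition of $\sim$ except possibly that of being type-preserving, and the whole of the work is to understand --- and, if the strict reading of ``type-preserving'' is intended, to repair --- the effect of $\sigma$ on types.

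So the first concrete step is to compute how $\sigma$ acts on types. Reading the Start/Finish columns of Figure~\ref{fig-sum-indec-in-inc-osc}, reverse-complement interchanges the Start and Finish of a single letter, the inverse replaces each by its opposite arrow, and their composition does both. Because the type of a word is the pair (Start of its first letter, Finish of its last letter), and because reverse-complement additionally reverses the order of the letters whereas the inverse preserves it, these letterwise rules lift to the following action on the type $(s,f)$ of a whole word: reverse-complement sends $(s,f)$ to $(f,s)$, the inverse sends it to $(\bar s,\bar f)$, and the composition to $(\bar f,\bar s)$. Imposing the hypothesis that $X$ and $\sigma(X)$ share one and the same type, with both coordinates defined, now leaves only three possibilities: $\sigma$ is the identity (and there is nothing to do), or $\sigma$ is reverse-complement and $s=f$, or $\sigma$ is the composition and $s=\bar f$. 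The bare inverse is excluded, since $\bar s\neq s$.

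With the type-action in hand I would reduce the strict form of the statement to a counting identity. A size- and type-preserving bijection of $\SIO$ sending $\Av(X)$ to $\Av(\sigma(X))$ exists if and only if, for every type $T$ and every length $n$, the permutations of length $n$ and type $T$ avoiding $X$ are equinumerous with those avoiding $\sigma(X)$; given these equalities one assembles $\Phi$ from arbitrary in-class bijections. On every type class that $\sigma$ fixes --- for reverse-complement the classes with $s=f$, for the composition those with $s=\bar f$ --- the restriction of $\sigma$ already provides the bijection. On the remaining classes $\sigma$ merely swaps $T$ with a partner class $\sigma(T)$, and applying $\sigma$ gives only $|T_n\cap\Av(X)|=|\sigma(T)_n\cap\Av(\sigma(X))|$; so what is left to prove is $|T_n\cap\Av(\sigma(X))|=|\sigma(T)_n\cap\Av(\sigma(X))|$, that is, a size-preserving bijection between the two partner classes that preserves avoidance of the \emph{single} fixed pattern $\sigma(X)$.

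This last point is the crux, and I expect it to be the main obstacle, because no composition of the four global symmetries both swaps a pair of partner classes and fixes the pattern $\sigma(X)$; the bijection must be genuinely local. Here is where the greedy containment description is essential: whether $\sigma(X)$ is involved in a word $P$ is decided by a single left-to-right sweep that matches the letters of $\sigma(X)$ into the oscillation-letters of $P$, and this sweep is unaffected by reverse-complementing an individual letter of $P$, since an oscillation and its reverse-complement host exactly the same sub-oscillations. I would use this to define the partner-class bijection by reverse-complementing a controlled boundary portion of $P$, chosen just large enough to toggle the Start of the first letter and the Finish of the last letter and hence to move $P$ into its partner class, while arguing that the greedy test for $\sigma(X)$ succeeds on $P$ precisely when it succeeds on the image. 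Making this local move simultaneously an involution, exactly type-swapping, and faithful to containment of $\sigma(X)$ is the delicate step, and it is exactly the hypothesis that $\sigma$ fixes the type of $\sigma(X)$ that should render the greedy sweep symmetric enough for faithfulness to hold. (If instead ``type-preserving'' is read only as respecting the partition of $\SIO$ into type classes, then $\sigma$ itself, which permutes those classes and fixes the type of $X$, is already the required bijection and this last paragraph is unnecessary.)
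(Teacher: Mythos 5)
Your setup is sound and in fact mirrors the paper's opening remark: $\sigma$ itself is a size-preserving bijection carrying $\SIO\cap\Av(X)$ onto $\SIO\cap\Av(\sigma(X))$, and the only issue is type-preservation. Your computation of the action on types (reverse-complement exchanges, inverse negates, their composition does both) and the resulting case analysis ($\sigma$ is reverse-complement with $s=f$, or the composition with $s=\bar f$, the bare inverse being excluded) is correct, and your reduction to a per-type, per-size counting identity on the type classes swapped by $\sigma$ is a valid reformulation. But the proof stops exactly where the real work begins: the partner-class bijection preserving avoidance of the single pattern $\sigma(X)$ is never constructed. You correctly call this "the crux" and "the delicate step," and then leave it as a sketch; that sketch, as written, also contains a false step. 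Reverse-complementing an individual letter of the host word is \emph{not} containment-neutral: for instance $\m_3\leq\m_3$ but $\m_3\not\leq\w_3$, so swapping a boundary letter $\w_3\leftrightarrow\m_3$ can change which patterns embed. The single-letter fact you invoke (a path-letter and its image host the same sub-oscillations) holds only for \emph{proper} subpatterns of one letter in isolation, and breaks down precisely at the boundary interactions your construction would need to control.

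The paper's proof consists essentially of resolving this step, and it does so by a finer surgery than letter-replacement. Given $W$, it strips the maximal prefix and suffix over the "unusable" alphabet ($\{\a,\b,\w_3\}$ or $\{\a,\b,\m_3\}$, depending on the type of $X$ as in its table), leaving an essential part $E$ whose letters at either end are large enough to matter; it then deletes the single first (resp.\ last) \emph{edge} of the diagram of $E$ when its slope disagrees with the Start (resp.\ Finish) of $X$, producing $E'$ whose type equals that of $X$ and hence is fixed by $\sigma$; it applies the global symmetry $\sigma$ to $E'$; and finally it reattaches the deleted edges, restoring the original Start and Finish. Faithfulness is then immediate: the stripped letters and trimmed edges cannot participate in any embedding of $X$ or of $\sigma(X)$ (both have the same type, which is what the hypothesis guarantees), and $\sigma$ is an order-automorphism, so $X\leq W$ iff $X\leq E'$ iff $\sigma(X)\leq\sigma(E')$ iff $\sigma(X)\leq\Phi(W)$. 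This edge-level delete-and-reattach trick is the missing idea in your proposal; without it (or an equivalent device) the "delicate step" you defer is exactly the lemma, so the proposal as it stands has a genuine gap.
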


\begin{proof}
The result would be trivial if we only required a size-preserving bijection since $\sigma$ is such a bijection between  $\SIO \cap \Av(X)$ and $\SIO \cap \Av(\sigma(X))$. However, the required construction is not that much more complex. Without loss of generality suppose that the type of $X$ is $(\nearrow, \searrow)$ (all other cases can be proven in the same way). In this case $\sigma$ must be the symmetry whose action on symbols is reflection in a vertical axis. This symmetry has the effect of exchanging and inverting the Start and Finish of a word.

If $W \in \{ \a, \b, \w_3 \}^*$ just set $\Phi(W) = W$ (since the Start and Finish of $X$ are $\nearrow$ and $\searrow$ respectively, all such $W$ avoid both $X$ and $\sigma(X)$). Otherwise let $W = PES$ where $P$ is the maximal prefix of $W$ in $\{ \a, \b, \w_3 \}^*$ and $S$ is the maximal suffix in $\{ \a, \b, \w_3 \}^*$ (either might be empty). The word $E$ (the ``essential'' part of $W$) has a defined Start and Finish. If $\Phi(E)$ is defined (having the same type and size as $E$ and behaving correctly in the sense that $X \leq E$ if and only if $\sigma(X) \leq \Phi(E)$) then we can set $\Phi(W) = P \Phi(E) S$ since this preserves type and size and $X \leq W$ if and only if $X \leq E$.

So let a word $E$ in $\A^*$ having defined Start and Finish be given that neither begins nor ends with $\w_3$. Define the word $E'$ whose diagram is obtained from the diagram of $E$ by deleting the first edge if the Start of $E$ is not $\nearrow$ and the last edge if the Finish of $E$ is not $\searrow$. Because $E$ does not begin or end with $\w_3$ the Start of $E'$ is $\nearrow$ and its Finish is $\searrow$. Note also that $X \leq E$ if and only if $X \leq E'$. Finally, to define $\Phi(E)$ just take $\sigma(E')$ and reattach the edges that were deleted. For example, if $E = \w_7 \a \m_5$ then the following series of diagrams shows the construction of $\Phi(E) = \w_6 \a \w_6$ (the dots on the endpoint of the edge that is deleted/added are for emphasis only):
\begin{align*}
E &= \w_7 \a \m_5 &=\begin{tikzpicture}[scale = 0.5]
\draw[fill] (-1,1) circle (0.0);
\draw[fill] (0,1) circle (0.1);
\draw (0,1) -- (1,0) -- (2,1) -- (3,0) -- (4,1) -- (5,0) -- (6,1);
\draw[fill] (7, 0.5) circle (0.1);
\draw (8,0) -- (9,1) -- (10,0) -- (11,1) -- (12,0);
\end{tikzpicture} \\
E' &= \m_6 \a \m_5 &= \begin{tikzpicture}[scale = 0.5]
\draw[fill] (-1,1) circle (0.0);
\draw[fill] (0,1) circle (0.0);
\draw (1,0) -- (2,1) -- (3,0) -- (4,1) -- (5,0) -- (6,1);
\draw[fill] (7, 0.5) circle (0.1);
\draw (8,0) -- (9,1) -- (10,0) -- (11,1) -- (12,0);
\end{tikzpicture} \\
\sigma(E') &= \m_5 \a \w_6 &=
\begin{tikzpicture}[scale = 0.5]
\draw[fill] (-1,1) circle (0.0);
\draw (1,0) -- (2,1) -- (3,0) -- (4,1) -- (5,0);
\draw[fill] (6, 0.5) circle (0.1);
\draw (7,1) -- (8,0) -- (9,1) -- (10,0) -- (11,1) -- (12,0);
\end{tikzpicture}  \\
\Phi(E) &= \w_6 \a \w_6 &=
\begin{tikzpicture}[scale = 0.5]
\draw[fill] (-1,1) circle (0.0);
\draw[fill] (0,1) circle (0.1);
\draw (0,1) -- (1,0) -- (2,1) -- (3,0) -- (4,1) -- (5,0);
\draw[fill] (6, 0.5) circle (0.1);
\draw (7,1) -- (8,0) -- (9,1) -- (10,0) -- (11,1) -- (12,0);
\end{tikzpicture}
\end{align*}

The following table shows how to adjust the construction for each possible type of $X$ -- the main point is that we remove the maximal unusable prefix (with respect to including $X$) and the maximal unusable suffix (ditto) and then operate on the remaining essential part.

\centerline{
\begin{tabular}{ccc}
Type of $X$ & Prefix & Suffix \\ \hline
$(\nearrow, \nearrow)$ & $\{\a, \b, \w_3\}^\ast$ & $\{\a, \b, \m_3\}^\ast$ \\
$(\nearrow, \searrow)$ & $\{\a, \b, \w_3\}^\ast$ & $\{\a, \b, \w_3\}^\ast$ \\
$(\searrow, \nearrow)$ & $\{\a, \b, \m_3\}^\ast$ & $\{\a, \b, \m_3\}^\ast$ \\
$(\searrow, \searrow)$ & $\{\a, \b, \m_3\}^\ast$ & $\{\a, \b, \w_3\}^\ast$
\end{tabular}
}
\end{proof}

Now comes the main ingredient of the collapse -- a substitution principle:

\begin{proposition}
\label{prop-substitution}
Suppose that $X \sim Y$ and that $P, S \in \A^*$. Then $P X S \sim P Y S$.
\end{proposition}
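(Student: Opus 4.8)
The plan is to lift the bijection $\Phi$ witnessing $X \sim Y$ to a bijection $\Psi$ witnessing $PXS \sim PYS$, by applying $\Phi$ only to a canonically isolated middle factor of each host word while leaving a greedily determined prefix and suffix fixed.

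First I would record the two-sided greedy containment principle for words in $\SIO$, the word analogue of Corollary~\ref{cor:pref}. Reading a host word $W$ from the left there is a unique minimal whole-letter prefix, the \emph{$P$-prefix}, that contains $P$ (undefined when $P \not\leq W$), and for every word $Z$ one has $PZ \leq W$ if and only if the $P$-prefix is defined and $Z$ lies in the remaining suffix; since $\SIO$ is closed under the reverse-complement symmetry, which reverses the letters of a word, the mirror statement holds from the right and supplies the \emph{$S$-suffix}. The feature I lean on is that a minimal prefix containing $P$ depends only on its own letters: if $W_1$ contains $P$ but no proper prefix of it does, then $W_1$ is the $P$-prefix of $W_1 M W_3$ for \emph{every} $M, W_3$, and dually on the right. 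Consequently, taking $W_1$ to be the $P$-prefix of $W$ and $W_3$ the $S$-suffix of the remainder yields a canonical factorisation $W = W_1 M W_3$ (when both exist; call such $W$ \emph{splittable}) in which $W_1$, $W_3$, $M$ vary independently over the minimal $P$-containers, the minimal $S$-containers, and all of $\SIO$ respectively, and in which the triple is uniquely determined by $W$. Feeding the left principle for $P$ into the right principle for $S$ then gives the crucial equivalence $PXS \leq W \iff X \leq M$ for splittable $W$; for non-splittable $W$, neither $PXS$ nor $PYS$ is contained in $W$.

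I would then define $\Psi(W) = W_1\,\Phi(M)\,W_3$ on splittable words and $\Psi(W) = W$ otherwise. Because $W_1$ remains a minimal $P$-container and $W_3$ a minimal $S$-container, $\Psi(W)$ is again splittable with middle $\Phi(M)$, so $\Psi$ preserves splittability; as $\Phi$ is size-preserving so is $\Psi$, and uniqueness of the decomposition together with bijectivity of $\Phi$ (its inverse acting by $V_1 N V_3 \mapsto V_1 \Phi^{-1}(N) V_3$) makes $\Psi$ a bijection of $\SIO$. That $\Psi$ is type-preserving is exactly where the type clause in the definition of $\sim$ earns its keep: when $P$ and $S$ are both nonempty, the Start and Finish of $\Psi(W)$ are read off the untouched $W_1$ and $W_3$, while when $P$ (or $S$) is empty and $W_1$ (or $W_3$) vanishes the corresponding end is inherited from $\Phi(M)$ and agrees with that of $M$ precisely because $\Phi$ preserves type. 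Finally, combining $PXS \leq W \iff X \leq M$ with the defining property $M \in \Av(X) \iff \Phi(M) \in \Av(Y)$ of $\Phi$ and the analogous equivalence for $\Psi(W)$ gives $W \in \Av(PXS) \iff \Psi(W) \in \Av(PYS)$, so $\Psi$ maps $\SIO \cap \Av(PXS)$ onto $\SIO \cap \Av(PYS)$ and $PXS \sim PYS$.

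I expect the main obstacle to be the two-sided greedy principle and, with it, the claim that the middle factor is genuinely unconstrained: one must check that stripping the greedy $P$-prefix from the left and the greedy $S$-suffix from the right isolates disjoint whole-letter blocks that never compete for the same letter, and that the ``no connecting edge'' condition across the two boundaries of $M$ holds automatically, since consecutive sum-components of a word are disconnected in the inversion graph. Once this independence is in place, both the substitution and the type- and size-preserving bijectivity of $\Psi$ follow.
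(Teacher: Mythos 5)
Your overall architecture (strip a greedily determined prefix and suffix, apply $\Phi$ to the middle factor, reattach) is the same as the paper's, but your key lemma --- the ``two-sided greedy containment principle'' at \emph{whole-letter} granularity --- is false, and the construction built on it fails. In $\SIO$, containment does not respect letter boundaries: a single host letter can absorb several pattern letters, matched to disjoint, non-adjacent sub-paths of that one letter. Concretely, take $P = \a$ and $W = \w_3$ (as permutations, $P = 1$ and $W = 231$). The minimal whole-letter prefix of $W$ containing $P$ is the entire letter $\w_3$, so your remainder $M$ is empty; yet $\a\a \leq \w_3$ (i.e.\ $12 \leq 231$), so $PZ \leq W$ holds with $Z = \a \not\leq M$. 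Similarly $\b\b \leq \m_6$ although the whole-letter $\b$-prefix of $\m_6$ is all of $\m_6$. Thus your equivalence $PZ \leq W \iff Z \leq M$ is wrong, hence so is the chain yielding $W \in \Av(PXS) \iff \Psi(W) \in \Av(PYS)$: your $\Psi$ is a perfectly good size- and type-preserving bijection, but it does not exchange the avoidance sets. The caveat you raise at the end (the no-connecting-edge condition across the boundaries of $M$) is not the real danger; the real problem is that a leftmost embedding of $P$ typically ends \emph{in the middle} of a letter, and the unused part of that letter remains available to the rest of the pattern, so whole-letter stripping discards capacity that containment is allowed to use.

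This is precisely where the paper's proof does more work. There, the leftmost embedding of $P$ is taken at the vertex/edge level inside letters; it ends in some letter $x$, whose usable remainder $x'$ (starting at the next vertex but one past the embedding, to respect non-adjacency) is \emph{kept inside} the middle word $W' = x' M y'$, and symmetrically on the right with the usable part $y'$ of the letter $y$ where the rightmost embedding of $S$ begins. The bijection $\Phi$ is applied to this $W'$, and only now does type-preservation really earn its keep: because $\Phi(W')$ has the same Start and Finish as $W'$, the consumed portions of $x$ and $y$ can be glued back onto $\Phi(W')$, producing $\Psi(W)$. (The paper also treats separately the degenerate case where the embedding of $P$ ends and the embedding of $S$ begins in the same letter of $W$.) Your argument cannot be repaired while staying at whole-letter granularity; it needs this partial-letter bookkeeping.
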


\begin{proof}
Let $\Phi : \SIO \to \SIO$ be a size and type-preserving bijection whose existence is guaranteed by the condition that $X \sim Y$. We will construct a size and type-preserving bijection $\Psi: \SIO \to \SIO$ which witnesses $PXS \sim PYS$.

To this end, let $W \in \A^*$ be given. If $PS \not \leq W$ set $\Psi(W) = W$. So suppose that $PS \leq W$. Take a leftmost embedding of $P$ into $W$ and a rightmost embedding of $S$ into $W$. If $P$ is non-empty then the embedding of $P$ ends in some letter $x$ and there might remain some ``usable'' part of $x$ (beginning with the next vertex but one after the end of $P$) call it $x'$ (if $P$ is empty then $x$ will just be the first character of $W$ and $x' = x$). Likewise $S$ starts in some letter $y$ and there might be some usable part of $y$ before the beginning of $S$, call it $y'$. Suppose first that $x \neq y$ (not just as characters but as letters in the word $W$) and write $W = A x M y B$. Consider $W' = x' M y'$. Now $PXS \leq W$ if and only if $X \leq W'$. So $PXS \leq W$ if and only if $Y \leq \Phi(W')$. Since the Start and Finish of $\Phi(W')$ are the same as of $W'$ we can reattach the part of $x$ before $W'$ and the part of $y$ after $W'$ to $\Phi(W')$ in the same way as they were originally attached to $W$. Define $\Psi(W)$ to be the concatenation of $A$, this reattachment, and $B$. If $P$ is non-empty then $Ax$ is non-empty and the Start of $\Psi(W)$ is the same as the Start of $Ax$ and hence of $W$. On the other hand if $P$ is empty then the Start of $W$ is just the Start of $x'$ (and so of $W'$), and once again the Start of $\Psi(W)$ is the same as that of $W$. Similar considerations show that the Finish of $\Psi(W)$ is the same as the Finish of $W$ and complete this case.

However, the case when $x$ any $y$ are the same character is essentially the same, but $W'$ just consists of a single letter which is the usable part of the letter in which $P$ ends and $S$ begins.
\end{proof}

\begin{theorem}
The class $\SIO$ has an exponential Wilf-collapse.
\end{theorem}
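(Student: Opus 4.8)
The plan is to quantify the collapse produced by Lemma~\ref{symmetry} and Proposition~\ref{prop-substitution} and weigh it against the growth rate of $\SIO$ itself. First I would pin down that growth rate. Since $\SIO$ is sum-closed with sum-indecomposable generating function $f(x)=x+x^2+2x^3/(1-x)$, its generating function is $1/(1-f(x))$, whose dominant singularity is the least positive root $\rho$ of $f(\rho)=1$, equivalently $\rho^3+2\rho-1=0$. As this is a simple pole with $\rho\approx 0.4534<1$, the count $c_n$ of permutations of size $n$ in $\SIO$ satisfies $c_n\sim C\gamma^n$ with $\gamma=1/\rho>1$. Our target is $w_n=o(r^n c_n)$ for some $r<1$; since $X\sim Y$ implies $X\we_{\SIO}Y$, it suffices to bound the number $N_n$ of $\sim$-classes.

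The engine of the collapse is a \emph{local reflection}. For a word $W=PFS$ whose factor $F$ has defined Start and Finish, Lemma~\ref{symmetry} gives $F\sim\sigma(F)$ for the unique non-identity symmetry $\sigma$ fixing the type of $F$ (the $180^\circ$ rotation when $\mathrm{Start}(F)=\mathrm{Finish}(F)$, the vertical reflection otherwise), and Proposition~\ref{prop-substitution} then yields $W=PFS\sim P\sigma(F)S$. The key observation is that each of these two type-fixing symmetries \emph{reverses} the order of the letters of $F$ while preserving the size of every individual letter. Hence, if the sequence of letter-sizes of $F$ is not a palindrome, then $\sigma(F)\neq F$. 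In particular, every length-two factor $F=xy$ with $|x|,|y|\geq 3$ and $|x|\neq|y|$ is sent to a genuinely different word that nonetheless remains $\sim$-equivalent to $W$.

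I would then exploit that disjoint local reflections act independently: because $\sigma$ operates letterwise up to reversal, it preserves the number of letters, so substituting in one factor disturbs neither the positions nor the contents of the others. Thus, if $W$ contains $m$ pairwise disjoint ``distinct-large'' length-two factors, then reflecting any subset of them yields, by transitivity of $\sim$, a $\sim$-equivalent word, and the $2^m$ resulting words are pairwise distinct. Writing $R(W)$ for the maximum number of such disjoint factors, every $W$ therefore lies in a $\sim$-class of size at least $2^{R(W)}$.

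Finally I would convert this into a counting bound. Fix a small $\delta>0$ and call $W$ \emph{bad} if $R(W)<\delta n$. Splitting the $\sim$-classes into those of size $\geq 2^{\delta n}$ and the rest, the former number at most $c_n/2^{\delta n}$, while every word of a smaller class is bad, so those classes number at most the number $B_n$ of bad words; hence $N_n\leq c_n/2^{\delta n}+B_n$. The first term is $o(r^n c_n)$ as soon as $r>2^{-\delta}$. The crux, and the step I expect to be the main obstacle, is to show that $B_n$ has growth rate $\beta<\gamma$ strictly below that of the whole class, so that $B_n=o(r^n c_n)$ as well for $r$ near $1$. This is a ``pattern-poor words are exponentially rare'' statement: words that seldom place two adjacent large letters of different sizes are confined to a strictly thinner regime --- already the words using only letters of size at most $3$ have growth rate $1/\rho'<\gamma$, where $\rho'+\rho'^2+2\rho'^3=1$ --- and a transfer-matrix large-deviation estimate on the automaton that tracks the previous letter's size should promote this to the bound $\beta<\gamma$ for all sufficiently small $\delta$. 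Combining the two estimates and choosing $r$ in the resulting nonempty subinterval of $(0,1)$ gives $w_n\leq N_n=o(r^n c_n)$, the desired exponential Wilf-collapse.
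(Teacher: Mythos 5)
Your proposal follows essentially the same route as the paper: the engine is identical (Lemma~\ref{symmetry} plus Proposition~\ref{prop-substitution} let you flip a two-letter factor of large letters by the unique type-fixing symmetry, disjoint flips are independent, and splitting the $\sim$-classes at the size threshold $2^{\delta n}$ gives $N_n \leq c_n 2^{-\delta n} + B_n$). Your mechanics here are sound, and in one respect more careful than the paper's own text: the paper asserts $\w_3\m_4 \sim \w_4\m_3$, but these two words have different types ($(\searrow,\nearrow)$ versus $(\searrow,\searrow)$), so Lemma~\ref{symmetry} does not apply to that pair as written; the flip the lemma actually provides is $\w_3\m_4 \sim \w_4\w_3$ via the vertical reflection, exactly as your ``type-fixing symmetry reverses letter order while preserving letter sizes'' analysis predicts. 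The slip is harmless for the paper's argument (any distinct equivalent word of the same size suffices), but your general formulation gets it right by construction.

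The only place your write-up is incomplete is the step you yourself flag as the main obstacle: the bound $B_n = O(\beta^n)$ with $\beta < \gamma$. The paper does not prove this either --- it invokes Proposition 3.10 of \cite{AJO:collapse-in-sum}, which asserts that in a supercritical sequence construction all but an exponentially small proportion of words of weight $n$ contain at least $cn$ disjoint occurrences of any fixed factor, applied here to the factor $\w_3\m_4$. So your obstacle is real but citable, and with that citation your argument closes. If you want it self-contained, note that your general notion reduces to the single-factor one: every disjoint family of occurrences of $\w_3\m_4$ is a disjoint family of ``distinct-large'' pairs, so $B_n$ is at most the number of words with fewer than $\delta n$ disjoint $\w_3\m_4$-factors. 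This reduction also spares you the transfer-matrix automaton you sketch, whose state space (the exact size of the previous letter) is infinite and would need bucketing before any finite large-deviation estimate applies; tracking occurrences of one fixed factor, or arguing directly from supercriticality of $A(t) = t + t^2 + 2t^3/(1-t)$ (radius $1 > \rho$), is the standard and cleaner path.
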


\begin{proof}
As noted previously, the generating function for the plus indecomposable members of $\SIO$ is
\[
A(t) = t + t^2 + \frac{2 t^3}{1- t}
\]
which is rational and of radius of convergence $1$ and therefore the class $\SIO$ is super-critical.

Proposition 3.10 of \cite{AJO:collapse-in-sum}  implies that for some constant $c > 0$, in all but an exponentially small proportion of the words in $\A^*$ of weight $n$ there are at least $c n$ disjoint occurrences of the factor $\w_3 \m_4$. However $\w_3 \m_4 \sim \w_4 \m_3$ so the $\sim$ (and hence Wilf) equivalence classes of all such words contain at least $2^{cn}$ elements, which demonstrates an exponential Wilf-collapse.
\end{proof}

\section{Conclusion}

This paper provides more evidence for a thesis that has previously been explored in \cite{AlbertBouvel, AlbertLi, AJO:collapse-in-sum}. Put most broadly:
\begin{quote}
\textit{If the elements of a class $\C$ admit a natural representation as words over some weighted alphabet in such a way that containment can be recognised at the word level in a greedy fashion, then global symmetries of the class can often be applied locally to provide Wilf-equivalent structures and this frequently implies a Wilf collapse.	}
\end{quote}

However, we have no other general criteria that yield Wilf-collapse. 

The class $\Av(321)$ provides a testing ground for this thesis for a couple of reasons. Results in \cite{AlbertLackner} show that there is a greedy form of testing for containment within this class. On the other hand, results in \cite{ABRV_321} also give a natural (or at least arguably natural) encoding of the elements of the class in the form of words. This encoding implies a restricted form for the generating functions of the classes $\Av(321, \pi)$, namely that they must be rational. These are exactly the classes where we would hope to see many coincidences between the generating functions if a Wilf-collapse exists in $\Av(321)$. However, the two results do not fit together well enough to allow the sort of manipulations that we have seen here, so this case remains an intriguingly open one.

\subsection{Acknowledgement}

The authors are grateful to an anonymous referee for suggestions that have greatly improved the presentation and clarity of some of the material.

\bibliographystyle{plain}
\bibliography{sio}

\end{document}